\documentclass[11pt]{amsart}
\usepackage{fullpage}

\usepackage{amsmath}
\usepackage{amsfonts}
\usepackage{amssymb}
\usepackage{amsthm}
\usepackage{subfigure}
\usepackage{graphicx}
\usepackage{color}
\usepackage{enumerate}
\usepackage{lineno}
\usepackage{pinlabel}


\newtheorem{theorem}{Theorem}[section]
\newtheorem{proposition}[theorem]{Proposition}
\newtheorem{lemma}[theorem]{Lemma}

\newtheorem{corollary}[theorem]{Corollary}

\theoremstyle{definition}
\newtheorem{definition}[theorem]{Definition}

\theoremstyle{remark}

\numberwithin{equation}{section}

\DeclareMathOperator{\cut}{\mathrm{Cut}}

\begin{document}




\title{The Las Vergnas Polynomial for embedded graphs}

\dedicatory{This paper is dedicated to the memory of Michel Las Vergnas in gratitude for not only so much beautiful mathematics, but also many instances of very kind and  insightful correspondence.}

\author[J.~Ellis-Monaghan]{Joanna A. Ellis-Monaghan}
\address{Department of Mathematics, Saint Michael's College, 1 Winooski Park, Colchester, VT 05439, USA.  }
\email{jellis-monaghan@smcvt.edu}

\author[I.~Moffatt]{Iain Moffatt}
\address{	Department of Mathematics,
Royal Holloway,
University of London,
Egham,
Surrey,
TW20 0EX,
United Kingdom.}
\email{iain.moffatt@rhul.ac.uk}

\subjclass[2010]{Primary 05C31; Secondary  05B35, 05C10}
\keywords{Las Vergnas polynomial, Bollob\'as-Riordan polynomial, Krushkal polynomial, Ribbon graph polynomial,  embedded graphs, matroid perspective, Tutte polynomial of a morphism of matroid}
\date{\today}

\begin{abstract}
The Las Vergnas polynomial is an extension of the Tutte polynomial to cellularly embedded graphs. It was introduced by  Michel Las~Vergnas in  1978 as special case of his Tutte polynomial of a morphism of matroids.  While the general Tutte polynomial of a morphism of matroids has a complete set of deletion-contraction relations, its specialisation to cellularly embedded graphs does not.
Here we  extend the Las Vergnas polynomial to graphs in pseudo-surfaces.  We show that in this setting we can define deletion and contraction for embedded graphs consistently with the deletion and contraction of the underlying matroid perspective, thus yielding a version of the Las Vergnas polynomial with complete recursive definition.
 This also enables us to obtain a deeper understanding of the relationships among the Las Vergnas polynomial, the Bollob\'as-Riordan polynomial, and the Krushkal polynomial.   We also take this opportunity to extend some of  Las Vergnas'  results on Eulerian circuits from graphs in surfaces of low genus to surfaces of arbitrary genus.
\end{abstract}

\maketitle

\section{introduction}

In \cite{Las78a,Las80}  (see also \cite{Las75,Las78}), Michel Las Vergnas introduced a polynomial $L_G(x,y,z)$ that extends the classical Tutte polynomial to cellularly embedded graphs. This topological Tutte polynomial, now called the Las Vergnas polynomial,  is the first extension of the Tutte polynomial to embedded graphs that the authors are aware of. Michel Las Vergnas was ahead of his time in his investigation as not until many years later did other mathematicians and physicists initiate the serious attention now paid to embedded graph polynomials.  More recent embedded graph polynomials, such as the ribbon graph polynomial of Bollob\'as and Riordan, $R_G$ (see \cite{BR1,BR2}),  and Krushkal's polynomial, $K_G$ (see \cite{Kr}), have led in turn to renewed interest in $L_G$, for example in \cite{ACEMS, But}.

  The Las Vergnas polynomial was first defined in terms of the combinatorial geometry of an embedded graph (i.e., via circuit matroids). It arises as  a special case in his much larger body of work on the Tutte polynomial of a morphism of matroids (see \cite{EL04,Las80,Las99,Las84,Las07,Las12}).  We present here a discussion of matroid perspectives in the special context of embedded graph theory.  Although $L_G$ has its origins in matroid theory, it is of independent interest as a tool for extracting both combinatorial and topological information from graphs embedded in surfaces.   Accordingly, one of the aims of this work is to provide a formulation of $L_G$ that is readily accessible to topological graph theorists without reference to matroid theory, and so to encourage further investigation into it.  (Also see \cite{ACEMS} for such a discussion.)

We are especially interested here in deletion-contraction definitions of graph polynomials. A very desirable property of such a recursive definition is that it reduces any graph to a linear combination of edgeless graphs.   Las Vergnas gave this type of  deletion-contraction definition for the Tutte polynomial of a morphism of matroids.  This definition, however, does not hold for his cellularly embedded graph polynomial $L_G$.

We show that by using an appropriate matroid framework to extend the  Las Vergnas polynomial to  graphs in pseudo-surfaces  (but not necessarily cellularly embedded graphs) it is possible to obtain a deletion-contraction definition of $L_G$ in the language of topological graph theory. Furthermore, this recursive definition for the embedded graph polynomial is consistent with that for the Tutte polynomial of a morphism of matroids. Our approach begins by associating an abstract graph $G^{\dagger}$  to a graph in a pseudo-surface in analogy with the construction of $G^*$ for a cellularly embedded graph $G$.  We then see that the bond matroid of $G^{\dagger}$  measures how a graph separates the pseudo-surface it is embedded in  the same way as the bond matroid of $G^*$ does. This matroid allows us to extend the Las Vergnas polynomial to  the broader class of graphs in pseudo-surfaces. Moreover, this extended polynomial arises as a special case Tutte polynomial of a morphism of matroids, just as the original polynomial for cellularly embedded graphs did.
  By  using a deletion and contraction for  graphs in pseudo-surfaces that is compatible with deletion and contraction for their associated matroid perspectives, we are able to give  complete deletion-contraction relations for the Las Vergnas polynomial.

 Given the three  extensions $L_G$,  $R_G$ and $K_G$ of the Tutte polynomial to embedded graphs, it is natural to ask how they are related. The Krushkal polynomial, $K_G$, contains both the embedded graph polynomials $L_G$ and $R_G$ as specialisations  (see \cite{ACEMS,But}), but does not yet provide a full understanding of the connection between the two polynomials.  We  similarly relate the Las Vergnas polynomial and Krushkal polynomials for (not necessarily cellularly embedded)  graphs in surfaces, and discuss  connections among these three topological Tutte polynomials.

 We also take the opportunity here to revisit some of Las Vergnas' work on Eulerian circuits.   In \cite{Las78}, Las Vergnas gave a number of formulae for enumerating Eulerian circuits of $4$-regular graphs  in  surfaces.  However, most of the formulas only apply for graphs in the sphere, torus, or real projective plane.  Now, with recently developed language and tools for ribbon graphs  we are able to extend these results to all surfaces.

 \section{Background on embedded graphs}
Our main aim here is to understand the Las Vergnas polynomial, which is defined as a polynomial of matroid perspectives, in the context of contemporary research in polynomials of embedded graphs (see also \cite{ACEMS} for work in this direction).  Doing so reveals its connections with other topological graph polynomials, exposes nuances of deletion and contraction, and facilitates future research.  In this section, we provide a brief review of some standard notation, assuming familiarity with basic graph theory and topological graph theory.  We pay particular attention to the language of ribbon graphs, since most of the recent research on topological graph polynomials appears in this context. Ribbon graphs will also be important in Section~\ref{s.lg}.
Further details of the material covered in this section may be found in \cite{EMMbook,GT87}.

 \subsection{Embedded graphs}\label{ss.eg}

As usual, if $G$ is a graph, then  $V(G)$ is its vertex set, and $E(G)$ its edge set, with $v(G):=|V(G)|$ and  $e(G):=|E(G)|$. We denote the number of components of $G$ by $c(G)$.The rank of $G$ is $r(G):=v(G)-c(G)$, and the nullity of $G$ is $n(G):=e(G)-r(G)$.  These agree with the rank and nullity of the cycle matroid of the graph as discussed in Section~\ref{ss.matroidsnew}. 
If $A\subseteq E(G)$, then $v(A)$, $e(A)$, $c(A)$, $r(A)$, and $n(A)$ are the number of vertices, number of  edges, number of components, rank and nullity,  respectively, of the spanning subgraph $(V(G),A)$ of $G$. In cases where the graph $G$ may not be immediately clear from context, we will use a subscript writing, for example, $r_G(A)$.

\medskip

 Let $\Sigma$ be a connected surface or pseudo-surface (i.e., a surface with pinch points,  also known as a pinched surface), possibly with boundary. 
We use $k(\Sigma)$ to denote the number of connected components of the pseudo-surface $\Sigma$. 
  For a subset $X$ of $\Sigma$, we let  
  $N(X)$ denote a regular neighbourhood of $X$.  

If $\Sigma$ is a surface (without pinch points) its {\em Euler genus}, $\gamma(\Sigma)$, is its genus if it is non-orientable, and twice its genus if it is orientable. Recall that the {\em Euler characteristic}, $\chi(\Sigma)$, of $\Sigma$ can be obtained as   $\chi(\Sigma)=v_t-e_t+f_t$, where $v_t$, $e_t$, and $f_t$ are the numbers of vertices, edges, and faces, respectively, in any triangulation (or more generally, cellulation) of $\Sigma$.  {\em Euler's formula} gives that $ \gamma(\Sigma) = 2k(\Sigma)-b(\Sigma) -\chi(\Sigma)$, where  $b(\Sigma)$ is the number of the boundary components of $\Sigma$.

\medskip

A \emph{graph in a pseudo-surface},  $G\subset \Sigma$, consists of a graph $G$ and a drawing of $G$ on a pseudo-surface $\Sigma$   such that the edges only intersect at their ends and such that any pinch points are vertices of the graph.

The components of $\Sigma \backslash G$ are called the {\em regions} of $G$.  If each region of $G\subset \Sigma$  is homeomorphic to an open disc, it is said to be  {\em cellularly embedded} and the regions are called \emph{faces}.  Furthermore,  $G\subset \Sigma$ is a  \emph{cellularly embedded graph} if it is cellularly embedded and $\Sigma$ is a surface (so there are no pinch points). 
 If $G\subset \Sigma$ is a graph in the  pseudo-surface and $A\subseteq E(G)$  then we define $\rho(A)$ to be the number of regions of the spanning subgraph $(V(G) \cup  A) \subset \Sigma $ of $G\subset \Sigma$. That is, $ \rho(A) = k(\Sigma  \backslash (V(G) \cup  A))$. 
 If $G$ is not clear from context, we will specify it with a subscript, thus:  $\rho_G( A )$.

Deletion of an edge of a graph in a pseudo-surface is straight forward.  Given $G\subset \Sigma$ and $e\in E(G)$ then $G\backslash  e\subset \Sigma$ is the  graph in a pseudo-surface obtained by removing the edge $e$ from the drawing of $G\subset \Sigma$ (without removing the  points of $e$ from $\Sigma$, or its incident vertices).    Edge contraction is defined by forming a quotient space of the surface.  $G/e \subset \Sigma/e$ is the graph in a pseudo-surface obtained by identifying the edge $e$ to a point. This point becomes a vertex of $G/e$. 
Note that if $e$ is a loop, then contraction can create pinch points with the new vertex lying on it (see Figure~\ref{cont}).   For example,  if $G\subset \Sigma$ consists of a loop on a sphere, then  $G/  e\subset \Sigma$ consists of two spheres that meet at a pinch point, and that pinch point is a vertex.  Thus  the class of cellularly embedded graphs is not closed under either deletion or contraction.  
  At times it is convenient to view $G/e \subset \Sigma/e$ as the graph in a pseudo-surface that results from  removing a small open neighbourhood of $e$ from $\Sigma$, then identifying all boundary components that this creates to obtain a new vertex. 

An important observation for us here is that 
if $\widetilde{G}$ is the underlying abstract graph  of $G\subset \Sigma$, then the underlying abstract graph  of $G/e\subset \Sigma/e$ is $\widetilde{G}/  e$, similarly the underlying abstract graph  of $G\backslash e\subset \Sigma$ is $\widetilde{G}\backslash  e$.

\begin{figure}
\centering
\subfigure[$G\subset \Sigma$. ]{
\includegraphics[height=16mm]{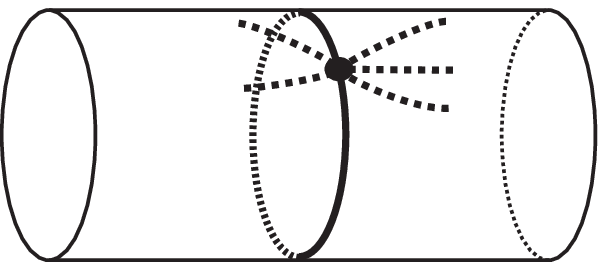}
\label{cont.a}
}
\hspace{1cm}
\subfigure[$G/e\subset \Sigma/e$.]{
\includegraphics[height=16mm]{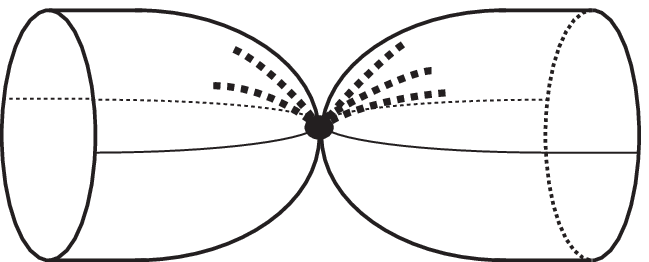}
\label{cont.c}
}
\caption{Contracting an orientable loop of a graph in a pseudo-surface.}
\label{cont}
\end{figure}

\subsection{Ribbon graphs}
At times it will be convenient to describe cellularly embedded graphs as ribbon graphs. We refer the reader to \cite{EMMbook} for a more detailed discussion of ribbon graphs.   A {\em ribbon graph} $G =\left(  V(G),E(G)  \right)$ is a surface with boundary, represented as the union of two  sets of  discs: a set $V (G)$ of {\em vertices} and a set of {\em edges} $E (G)$ such that: (1) the vertices and edges intersect in disjoint line segments;
(2) each such line segment lies on the boundary of precisely one
vertex and precisely one edge;
(3) every edge contains exactly two such line segments.

Ribbon graphs arise as regular neighbourhoods of cellularly embedded graphs, with the vertex set of the ribbon graph arising from regular neighbourhoods of the vertices of the embedded graph, and the edge set of the  ribbon graph arising from regular neighbourhoods of the edges of the embedded graph.  On the other hand, if $G$ is a ribbon
graph, then topologically it is a surface with boundary.  Filling in
each hole by identifying its boundary component with the boundary of a disc results in a
ribbon graph embedded in a closed surface. (This in a {\em band decomposition}  with the vertex discs called {\em 0-bands}, the edge discs called {\em 1-bands} and the   face discs are called {\em 2-bands}.) A deformation retract of the ribbon graph in the surface yields a
graph cellularly embedded in the surface. Thus, ribbon graphs, band decompositions, and {\em cellularly} embedded graphs are equivalent.  (Note that this equivalence requires both that the graph is in a surface, rather than just a pseudo-surface, and that it is cellularly embedded.) 

If $G$ is a ribbon graph, then $G^*$ is the ribbon graph corresponding to the geometric dual when $G$ is viewed as a cellularly embedded graph.  It is often useful to think of $G$ and $G^*$ in the setting of band decompositions.   In this setting  they are represented by the same topological object, the only difference being that the sets designated as vertices (0-bands) and face discs (2-bands) are reversed.  Note that the dual of an isolated vertex is an isolated vertex.

 If $G$ is a ribbon graph, then $v(G)$, $e(G)$, $c(G)$, $r(G)$, and $n(G)$  are all as defined for the underlying abstract graph of $G$ (we use $c$ for the components of a graph and $k$ for the components of a surface, as these need not coincide if $G$ is not cellularly embedded). Furthermore,  $f(G)$ is the number of boundary components of the surface defining the ribbon graph, and  the  Euler genus, $\gamma(G) $, of  $G$ equals the Euler genus of the surface defining the ribbon graph. Since each boundary component of a ribbon graph corresponds to a face of a cellular embedding, Euler's formula gives that $v(G)-e(G)+f(G)=2c(G)-\gamma(G)$. A ribbon graph $G$ is   {\em plane}  if it is connected and $\gamma(G)=0$.  A ribbon graph $H $ is a {\em ribbon subgraph}  of $G$ if $H$ can be obtained by deleting vertices and edges of $G$. If $H$ is a ribbon subgraph of $G$ with $V(H)=V(G)$, then $H$ is a  {\em spanning ribbon subgraph} of $G$.  If $A \subseteq E(G)$, then $r(A)$, $c(A)$, $n(A)$,  $f(A)$, $\gamma(A)$ each refer to the spanning subgraph $(V(G),A)$ of $G$ (where $G$ is given by context).

Deletion for ribbon graphs just removes an edge: if $G$ is a ribbon graph, and $e\in E(G)$,  then $G - e$ is the spanning ribbon subgraph on edge set $E(G)\backslash \{e\}$.  (Note that we use ``$-$'' for ribbon graph edge deletion,    and ``$\backslash$'' for embedded graph edge deletion.)  An important aspect of deletion as defined via ribbon graphs is that the result is again a ribbon graph.  Remembering that ribbon graphs correspond to cellularly embedded graphs, ribbon graph edge deletion is appropriate for cellularly embedded graphs as  $G- e$ remains in the class of cellularly embedded graphs.  However, the surface associated with $G$ by filling in the holes with discs may not be the same surface that results from filling in the holes of with $G-e$. For example, if $e$ is a bridge or a non-orientable loop, then deleting the former will increase the number of components of the surface, and deleting the latter may change the graph's orientability.

\section{Matroid perspectives and Las Vergnas' polynomial} 
We will now review the original Las Vergnas polynomial of a cellularly embedded graph. This polynomial  arises as a special case of the Tutte polynomial of a matroid perspective. In this section we describe how it can be written in terms of parameters that are more frequently used in the study of topological graph polynomials. This allows for the polynomial to be  positioned properly in the field.  We will also discuss  deletion-contraction relations for the polynomial. While the Tutte polynomial of a matroid perspective has a complete recursive definition (complete in the sense that it reduces the computation of the polynomial to that of the trivial matroid perspective), the Las Vergnas polynomial does not. We will explain why this is the case at the end of this section, and will turn our attention fully to deletion-contraction reductions in Section~\ref{newLVsec}.

\subsection{Matroids and matroid perspectives} \label{ss.matroidsnew}
 Since the Las~Vergnas polynomial was originally defined in terms of the Tutte polynomial of a matroid perspective, we review the essential concepts of matroids and matroid perspectives, and recall the definition of the Tutte polynomial of a matroid perspective. We will work with matroids in terms of rank functions since this is most appropriate for the connections with graph polynomials.

A {\em matroid} $M=(E,r)$ consists of a set $E$ and a {\em rank function}, $r: \mathcal{P}(E) \rightarrow  \mathbb{Z}_{\geq 0}$, from the power set of $E$ to the non-negative integers such that for each $A\subseteq E$ and $e,f\in E$ we have
\begin{align}
&r(\emptyset)=0,  \label{max1} \\
&r(A\cup \{e\})  \in \{ r(A), r(A)+1\},  \label{max2}
\\
 &r(A)=r(A\cup \{e\})=r(A\cup \{f\}) \implies r(A\cup \{e,f\})=r(A).  \label{max3}
\end{align}

A set $A\subseteq E$ is {\em independent} if $r(A)=|A|$, and dependent otherwise.  It is a  {\em circuit} if it is a minimal dependent set, so in particular, if $A$ is a circuit, then  $r(A)=|A|-1$. A set $A$ is a \emph{flat} if for all $e\in E-A$ we have $r(A \cup e) = r(A)+1$.  An element  $e\in E$ is an {\em isthmus} (or {\em coloop})  if for each independent set $A$ we have that $A\cup \{e\}$ is also independent.  An element $e$ is a {\em loop} if $\{e\}$ is a circuit.

If $M=(E,r)$ is a matroid and $e\in E$, then $M\backslash e = (E\backslash \{e\}, r|_{E\backslash \{e\}} )$ is the matroid obtained by {\em deleting} $e$; and $M/e = (E\backslash \{e\}, r' )$, where $r'(A):= r(A\cup \{e\})-r(\{e\})$, is the matroid obtained by {\em contracting} $e$.
The {\em dual} of $M$ is the matroid given by  $M^*=(E, r^*)$, where $r^*(A):= |A|+r(E\backslash A) -r(E)$.

If $G$ is a graph,  its {\em cycle matroid} is   $C(G):=(E(G), r_{C(G)})  $, where  $r_{C(G)}(A):= v(A)-c(A)$; and its {\em bond matroid} is   $B(G):=(C(G))^*$.
When $G$ is a plane graph (i.e., a graph cellularly embedded a sphere) $ B(G^*)=  (C(G^*))^* = C((G^*)^*) =C(G)$. However, this identity {\em does not} hold in general when $G$ cellularly embedded in a higher genus surface.

\medskip

A {\em matroid perspective}   is a  triple $(M,M', \varphi)$ where  $M=(E, r)$ and $M'=(E',r')$ are matroids, and  $\varphi: E \rightarrow E'$ is a bijection such that  for all $A\subseteq B \subseteq E$, we have
\begin{equation}\label{e.mp}
 r(B)- r(A) \geq r'(\varphi(B))-r'(\varphi(A)).
\end{equation}

Following the usual convention in the area, at times we suppress the bijection $\varphi$ and use $M\rightarrow M'$ to denote a matroid perspective $(M,M', \varphi)$, especially since we will primarily be interested in matroid perspectives of the form $(M,M', \mathrm{id})$, i.e., where the ground sets are the same or may be naturally identified.  When $\varphi$ is the identity, we say $M \rightarrow M'$ is a matroid perspective on the set $E$.
In this case, as noted in \cite{Las80}, the condition given by \eqref{e.mp} can be equivalently formulated as the requirement that each circuit of $M$  is a union of circuits of $M'$, or as the requirement that every flat of $M'$ is a flat of $M$.  Thus, in particular, a loop of $M$ is a loop of $M'$ and an isthmus of $M'$ is an isthmus of $M$.

Deletion and contraction for a matroid perspective $(M,M', \varphi)$ are defined by, for $e\in E$, setting $(M,M', \varphi)\backslash e:=  (M\backslash e,M'\backslash e, \varphi|_{E\backslash e})$ and $(M,M', \varphi)/e:=  (M/e,M'/e, \varphi|_{E\backslash e})$. We will denote these matroid perspectives by  $M\backslash e\rightarrow M'\backslash e$ and  $M/e\rightarrow M'/e$, respectively.

\subsection{The Tutte polynomial of a matroid perspective}

Let $M=(E, r)$ and $M'=(E',r')$ be matroids.
As defined in \cite{Las78a,Las80}, the {\em Tutte polynomial} of the matroid perspective  $M\rightarrow M'= (M,M', \varphi)$ is defined by
  \begin{equation}
   T_{M\rightarrow M'}(x,y,z) = \sum_{X \subseteq E}
                 (x - 1)^{r'( E') - r'( \varphi(X) )}
                 (y-1)^{|X|-r(X)}
                  z^{(r(E)-r(X))-(r'(E')-r'(\varphi(X)))}.
  \end{equation}

As noted in \cite{Las80}, the classical Tutte polynomial $T_M(x,y) =   \sum_{X \subseteq E}{(x - 1)^{r( E) - r(X)} (y-1)^{|X|-r(X)}}$ of a matroid $M$ can be recovered from the more general polynomial:
\begin{align*}
T_M(x,y)& = T_{M\rightarrow M}(x,y,z), \\
T_M(x,y)& = T_{M\rightarrow M'}(x,y,x-1), \\
T_{M'}(x,y)& =  (y-1)^{r(M)-r(M')} T_{M\rightarrow M'}(x,y,1/(y-1)).
\end{align*}

Las Vergnas (Theorem 5.3 of \cite{Las99}) showed that $T_{M\rightarrow M'}$ satisfies deletion-contraction relations that provide a complete recursive definition of the polynomial.
\begin{theorem}\label{lvdelcont}
Let $M \rightarrow M'$ be a matroid perspective on a set $E$.  The following relations hold:
\begin{enumerate}
\item if $e \in E$ is neither an isthmus nor a loop of $M$, then
\[ T_{M\rightarrow M'}(x,y,z) = T_{M\backslash e\rightarrow M'\backslash e}(x,y,z)+T_{M/e\rightarrow M'/e}(x,y,z);\]
\item if $e \in E$ is an isthmus of $M'$, and hence also an isthmus of $M$, then
\[T_{M\rightarrow M'}(x,y,z) = x T_{M\backslash e\rightarrow M'\backslash e}(x,y,z);\]
\item if $e \in E$ is a loop of $M$, and hence also a loop of $M'$, then
\[ T_{M\rightarrow M'}(x,y,z)=yT_{M\backslash e\rightarrow M'\backslash e}(x,y,z); \]
\item if $e \in E$ is an isthmus of $M$, and is not an isthmus of $M'$, then
\[T_{M\rightarrow M'}(x,y,z) =zT_{M\backslash e\rightarrow M'\backslash e}(x,y,z) +T_{M/e\rightarrow M'/e}(x,y,z);\]
\item if $E=\emptyset$,   then $T_{M\rightarrow M'}(x,y,z) =1$.
\end{enumerate}
\end{theorem}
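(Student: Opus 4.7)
The plan is the standard state-sum splitting argument used for the classical Tutte polynomial, adapted to matroid perspectives. For each $e\in E$ I partition
\[
T_{M\rightarrow M'}(x,y,z) = \sum_{X\subseteq E} (x-1)^{r'(E')-r'(\varphi(X))} (y-1)^{|X|-r(X)} z^{(r(E)-r(X))-(r'(E')-r'(\varphi(X)))}
\]
into the contributions from $X\not\ni e$ and from $X\ni e$. For the first piece I aim to recover $T_{M\backslash e\rightarrow M'\backslash e}(x,y,z)$, possibly up to a scalar factor, and for the second, after the substitution $X = Y\cup\{e\}$ with $Y\subseteq E\setminus \{e\}$, to recover $T_{M/e\rightarrow M'/e}(x,y,z)$, possibly up to a scalar factor. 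Case~(5) is immediate from the definition.

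The rank identities driving the computation are $r_{M\backslash e}(X) = r_M(X)$ for $X\subseteq E\setminus \{e\}$; $r(M\backslash e) = r(M)$ when $e$ is not an isthmus of $M$ and $r(M\backslash e) = r(M)-1$ when it is; $r_{M/e}(Y) = r(Y\cup\{e\}) - r(\{e\})$; and $r(M/e) = r(M) - r(\{e\})$, together with the analogous identities for $M'$. Combined with the implications noted in the excerpt (loops of $M$ are loops of $M'$, and isthmuses of $M'$ are isthmuses of $M$), these let me determine in each case how the three exponents in the state sum change when passing from $M\rightarrow M'$ to $M\backslash e\rightarrow M'\backslash e$ or $M/e\rightarrow M'/e$.

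In Case~(1) we have $r(\{e\}) = 1$ and all the $\pm 1$ adjustments cancel: the $X\not\ni e$ piece gives $T_{M\backslash e\rightarrow M'\backslash e}$, while the $X\ni e$ piece gives $T_{M/e\rightarrow M'/e}$ because the $+1$ in $|Y\cup\{e\}|$ exactly compensates $-r(\{e\}) = -1$. In Case~(2), an isthmus of $M'$ (hence of $M$) produces $r(M) - r(M\backslash e) = 1 = r'(E') - r'(M'\backslash e)$, giving an extra $(x-1)$ from the $X\not\ni e$ piece; simultaneously $M/e = M\backslash e$ and $M'/e = M'\backslash e$, so the $X\ni e$ piece contributes another copy of $T_{M\backslash e\rightarrow M'\backslash e}$, yielding the factor $x$. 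Case~(3) is the loop analogue: $r(\{e\}) = 0 = r'(\{\varphi(e)\})$, $M/e = M\backslash e$, $M'/e = M'\backslash e$, and the $X\ni e$ piece gains one in the $(y-1)$ exponent. In Case~(4), the asymmetric isthmus ($r(M)$ jumps by $1$ but $r'(E')$ does not) gives an extra $z$ on the $X\not\ni e$ piece, while the $X\ni e$ piece still matches $T_{M/e\rightarrow M'/e}$.

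I expect the only real subtlety to be Cases~(1) and~(4), where one must check that the argument still works if $e$ happens to be a loop of $M'$ without being a loop of $M$; in each such subcase the extra cancellations among $r'(\varphi(X))$, $r'(E')$, and $r'(\{\varphi(e)\})$ are the same as in the generic subcase, so no further bifurcation of the case analysis is needed. Once the rank identities above are in place, each case reduces to a routine exponent comparison.
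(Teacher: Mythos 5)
Your argument is correct: the subset splitting into $X\not\ni e$ and $X\ni e$, together with the rank identities you list and the observations that an isthmus of $M'$ is an isthmus of $M$ (so in Case~(1) the global ranks $r(E)$ and $r'(E')$ survive deletion) and that contraction of a non-loop shifts $|X|$ and $r(X)$ by the same amount, does verify all five relations, and you are right that no extra case split is needed when $e$ is a loop of $M'$ but not of $M$, since $r'(\{\varphi(e)\})$ cancels between $r'(M'/e)$ and $r'_{M'/e}(\varphi(Y))$. Note that the paper itself gives no proof of this statement --- it is quoted from Las Vergnas (Theorem~5.3 of \cite{Las99}) --- so your state-sum verification stands on its own; it is the standard argument and I find no gap in it.
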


\subsection{Las Vergnas' topological Tutte polynomial}\label{ss.LVTPnew}

 The Las Vergnas  polynomial, $L_G$, was first defined in terms of the combinatorial geometry of an embedded graph, that is, $B(G^*)$ and $C(G)$, the bond and circuit geometries, or equivalently bond and cycle matroids, of $G^*$ and $G$ from Subsection \ref{ss.matroidsnew}.  In Proposition~\ref{our L}, we describe $L_G$ in graph theoretical terms. (This approach was also taken in \cite{ACEMS}.)  In this section, like Las Vergnas, we assume that $G$ is cellularly embedded, so that $G^*$ is also cellularly embedded in the same surface as $G$.  In this setting, we use the  rank function of $C(G)$ which is given by $r_G(A)=v(A)-c(A)$ for $A \subseteq E(G)$.

\begin{definition}
Let $G$ be a graph cellularly embedded in a surface $\Sigma$. Let  $ B(G^*) \rightarrow C(G)$ denote the matroid perspective  $(B(G^*), C(G), \mathrm{id})$, where $\mathrm{id}$ is the natural identification of the edges of $G$ and $G^*$ and so suppressed in the following. Then the {\em Las~Vergnas polynomial}, $L_G$ is defined by
\[   L_G(x,y,z)   :=   T_{B(G^*) \rightarrow C(G)}  (x,y,z).  \]
\end{definition}

By translating the notation and using  Euler's formula, we can rewrite Las Vergnas' topological Tutte polynomial in a form that more clearly reveals how it encodes topological information (see also \cite{ACEMS}).

\begin{proposition}\label{our L}
Let $G$ be a ribbon graph. Then
\begin{equation}\label{e.lv}
   \begin{split}
   L_G(x,y,z) &= \sum_{A \subseteq E( G)}
                 (x - 1)^{r_G(G) - r_G(A)}
                 (y-1)^{n_G(A)  -(\gamma(G)+\gamma_G(A)   -\gamma_{G^*}(A^c))/2   }    z^{ (\gamma(G)   -\gamma_G(A)   +\gamma_{G^*}(A^c))/2 },
   \end{split}
  \end{equation}
   where $A^c:=E(G)-A$.

\end{proposition}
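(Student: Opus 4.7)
The plan is to start from the defining formula $L_G = T_{B(G^*)\to C(G)}$ and translate each of the three exponents in the defining sum into the advertised topological quantities. The $(x-1)$-exponent is immediate: since the cycle matroid rank is $r_{C(G)}(X) = v(X) - c(X) = r_G(X)$, we get $r_{C(G)}(E) - r_{C(G)}(A) = r_G(G) - r_G(A)$ on the nose.

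For the other two exponents, the key preparatory step is to compute the rank function of $B(G^*) = (C(G^*))^*$. Using the standard dual-matroid rank formula together with $r_{C(G^*)}(X) = v(G^*) - c_{G^*}(X)$ and $v(G^*) = f(G)$, a direct calculation gives
\[
|A| - r_{B(G^*)}(A) = c_{G^*}(A^c) - c(G^*), \qquad r_{B(G^*)}(E) - r_{B(G^*)}(A) = |A^c| - f(G) + c_{G^*}(A^c).
\]
At this point both remaining exponents are expressed via the number of components of the spanning ribbon subgraph $(V(G^*), A^c)$ of the dual, so the task reduces to converting these component counts into genera.

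To do this I will invoke the classical ribbon-graph duality $f_G(A) = f_{G^*}(A^c)$, which holds because $(V(G),A)$ and $(V(G^*), A^c)$ are complementary subsurfaces of $\Sigma$ sharing the same system of boundary circles (they are the 0-band$\cup$1-band$_A$ and 2-band$\cup$1-band$_{A^c}$ pieces of the band decomposition of $\Sigma$). Applying Euler's formula $v - e + f = 2c - \gamma$ separately to $(V(G),A)$, to $(V(G^*), A^c)$, and to $G$ itself, and using this boundary-correspondence to eliminate $f_G(A)$, a short rearrangement yields
\[
c_{G^*}(A^c) - c(G^*) \;=\; n_G(A) - \frac{\gamma(G) + \gamma_G(A) - \gamma_{G^*}(A^c)}{2},
\]
which is exactly the claimed $(y-1)$-exponent; an entirely parallel manipulation converts $|A^c| - f(G) + c_{G^*}(A^c) - (c_G(A) - c(G))$ into $(\gamma(G)-\gamma_G(A)+\gamma_{G^*}(A^c))/2$, the claimed $z$-exponent.

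The main obstacle is really just establishing the boundary-correspondence $f_G(A) = f_{G^*}(A^c)$ from the band-decomposition interpretation of ribbon-graph duality; once that is in hand, the proof reduces to three bookkeeping applications of Euler's formula. An auxiliary identity used implicitly throughout is $c(G) = c(G^*)$ for ribbon graphs, which itself follows from Euler's formula applied to $G$ and $G^*$ (both being cellulations of the same underlying surface with vertex and face roles swapped).
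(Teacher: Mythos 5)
Your proposal is correct and follows essentially the same route as the paper's proof: both translate the ranks of $B(G^*)$ via the dual-matroid rank formula and then use the boundary correspondence $f_{G}(A)=f_{G^*}(A^c)$ (with $f(G^*)=v(G)$) together with Euler's formula applied to the spanning ribbon subgraphs and to $G$ itself to convert the exponents into Euler genera. The only difference is organisational: you pass through the component counts $c_{G^*}(A^c)-c(G^*)$ before invoking Euler's formula, whereas the paper manipulates $2r_{B(G^*)}(A)$ directly; the underlying bookkeeping is identical.
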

\begin{proof}
By definition,  \begin{equation*}
   \begin{split}
   L_G(x,y,z) &= \sum_{A \subseteq E( G)}
                 (x - 1)^{r( C(G )) - r_{C(G)}( A )}
                 (y-1)^{|A|-r_{B(G^*)}(A)} z^{r(B(G^*))-r(C(G))-(r_{B(G^*)}(A)-r_{C(G)}(A))}. \\
   \end{split}
  \end{equation*}
Note that $r(C(G))=r(G)$ and  $r_{C(G)}(A)=r_G(A)$, and recall that $r_{M^*}(A) = |A|+r_M(M\backslash A) -r(M)$. Then, since $B(G)=(C(G))^*$, we have
$r(B(G^*))= r( C(G^*)^* )=  e(G^*)+r_{G^*}(\emptyset) -r(G^*)  = n(G^*) $, and
$r_{B(G^*)}  (A)=   r_{C(G^*)^*} (A) =  |A|+r_{G^*}(A^c) - r(G^*)  $.  (Recall that if $G$ is not plane, then $C(G^*)^*$ and $C(G)$ are not generally equal.)

 By Euler's formula and the facts that  $f_{G^*}(A^c)=f_G(A)$ and  $f(G^*)=v(G)$, we have
\begin{align*}
2r_{B(G^*)}  (A) & = 2|A|-2c_{G^*}(A^c)+2c(G^*) \\
 & = 2|A| -v_{G^*}(A^c) +|A^c|-f_{G^*}(A^c)-\gamma_{G^*}(A^c)+ v(G^*)-e(G^*)+f(G^*)+\gamma(G^*) \\
 &= |A| -v(G^*) +e(G^*)-f_{G^*}(A^c)-\gamma_{G^*}(A^c)+ v(G^*)-e(G^*)+f(G^*)+\gamma(G^*) \\
 &= |A|  -f_{G}(A)-\gamma_{G^*}(A^c)+v(G)+\gamma(G^*) \\
 &= v_G(A)-2c_G(A) +\gamma_G(A)   -\gamma_{G^*}(A^c)+v(G)+\gamma(G^*) \\
 &=  2r_G(A)+\gamma(G)+\gamma_G(A)   -\gamma_{G^*}(A^c).
\end{align*}
Then, using this computation for the exponent of $z$, we have:
\begin{align*}
r(B(G^*))-r(C(G))&-(r_{B(G^*)}(A)-r_{C(G)}(A))
\\
&=  n(G^*) -r(G) - r_G(A) +r_G(A)-(\gamma(G)+\gamma_G(A)   -\gamma_{G^*}(A^c))/2 \\
&=  e(G^*)-v(G^*)+c(G^*) - v(G) +c(G)  -(\gamma(G)+\gamma_G(A)   -\gamma_{G^*}(A^c))/2 \\
&=  e(G)-f(G)+2c(G) - v(G) -(\gamma(G)+\gamma_G(A)   -\gamma_{G^*}(A^c))/2 \\
&=  \gamma(G)  -(\gamma(G)+\gamma_G(A)   -\gamma_{G^*}(A^c))/2\\
&=  (\gamma(G)   -\gamma_G(A)   +\gamma_{G^*}(A^c))/2.
\end{align*}
 \end{proof}

If $G$ is plane, so that $\gamma(G)=\gamma(A)=0$ for all $A\subseteq E(G)$, then it is easily seen from Equation \ref{e.lv} that $ L_G(x,y,z) =T_G(x,y) $.
Furthermore, Las~Vergnas showed in \cite{Las80} that, for any cellularly embedded graph $G$, the Tutte polynomial of the underlying abstract graph of $G$ can be recovered from $L_G$ as
\begin{equation}\label{LtoT}
 (y-1)^{\gamma(G)} L_G(x,y,  1/(y-1) ) =T_G(x,y).
 \end{equation}

Collecting the topological contributions in the expression for $L_G$ given in Equation \ref{e.lv} gives the following particularly simple form of  $L_G$, which facilitates comparison with other topological graph polynomials.
\begin{equation}\label{tidyL}
  (z(y-1))^{\gamma(G)}   L_G\left( x,y, \frac{1}{z^2(y-1)}\right) =  \sum_{A \subseteq E( G)}
                 (x - 1)^{r_G(G) - r_G(A)}
                 (y-1)^{n_G(A) }    z^{ \gamma_G(A)   -\gamma_{G^*}(A^c) }.
\end{equation}

It is also informative to compare the following form of $L_G$, which is obtained by expanding the Euler genus terms using Euler's formula, to the dichromatic polynomial, $Z_G(x,y) := \sum_{A \subseteq E( G)} x^{c(A)}y^{|A|} = (x/y)^{c(G)}y^{v(G)}T_G((x+y)/x, y+1)$:
\[
L_G(x,y,z) = (1/(x-1)(y-1))^{c(G)} z^{n(G^*)} \sum_{A \subseteq E( G)} ((x-1)/z)^{c_G(A)}((y-1)z)^{c_{G^*}(A^c)}  (1/z)^{|A|}.
\]

We note that in \cite{CMNR} it is shown that $L_G$ is  determined by the delta-matroid of $G$, but we do not pursue this perspective here.

\subsection{Deletion-contraction}\label{ss.nodc}  
Although, by Theorem~\ref{lvdelcont}, the  Tutte  polynomial  of a matroid perspective $T_{M\rightarrow M'}$  has a deletion-contraction relation that applies to all types of edges, the Las~Vergnas polynomial  for cellularly embedded graphs (or ribbon graphs) does not (although it does have a deletion-contraction relation for some special types of edges). 
 Taking an example from \cite{ACEMS} a little further, if $G$ is the theta graph cellularly embedded on the torus, then $L_G(x,y,z)= 3z+2z^2+xz^2+1$.   Of the 17 cellularly embedded graphs on two edges, none have an $xz^2$ term, and so $L_G= 3z+2z^2+xz^2+1$ can not satisfy the deletion-contraction identities of Theorem~\ref{lvdelcont} for some  notion of edge deletion and contraction defined on the class of cellularly embedded graphs (such as ribbon graph deletion and contraction).   Thus although, for cellularly embedded graphs, $L_G$ is defined in terms of $T_{M\rightarrow M'}$, it does not inherit the recursive definition of $T_{M\rightarrow M'}$.

\section{The Las Vergnas polynomial for graphs in pseudo-surfaces} \label{newLVsec}  
The Las Vergnas polynomial of cellularly embedded graphs is not known to satisfy a complete recursive definition, even though its `parent', the Tutte polynomial of a matroid perspective, does. In this section we will describe how, by enlarging the domain of the Las Vergnas polynomial, the polynomial can be extended to a polynomial that does satisfy the deletion-contraction relations of Theorem~\ref{lvdelcont}. In keeping with the emphasis of this paper, we focus on  topological graph theoretic interpretations of the resulting polynomial.

\subsection{A generalised Las Vergnas polynomial.}
The construction of the matroid $B(G^*)$ used in the original definition of $L_G$  requires that $G$ be cellularly embedded so that the geometric dual $G^*$ is a well-defined cellularly embedded graph.  Essentially, what we want to extend the polynomial to  arbitrarily embedded graphs is a matroid that plays the role of the bond matroid of $G^*$ in the setting of non-cellularly embedded graphs. Mimicking the usual construction of $G^*$ by  placing a vertex in each connected component of the complement of $G$, and connecting vertices whose regions share an edge involves choices of how to embed the new edges in the surface that may result in inequivalent embeddings (although of the same abstract graph).  Nevertheless, we can construct an \emph{abstract} graph $G^{\dagger}$ from  $G \subset \Sigma$ whose bond matroid has the desired properties.
\begin{definition} Given a graph in a pseudo-surface, $G \subset \Sigma$, we define  $G^{\dagger}$ to be the abstract graph with vertex set corresponding to the regions of $\Sigma \backslash E$ and an edge between  (not necessarily distinct) vertices whenever the corresponding regions share an edge of $G$ on their boundaries (technically, the boundaries of regions meet boundaries of regular neighborhoods of edges, but the meaning is clear). Note that as in the case of geometric duals, this gives a natural identification between the edges of $G^{\dagger}$ and $G$.
\end{definition}

Given our interest in graph polynomials, our first aim is to understand the bond matroid $B(G^{\dagger})$ and its rank function in terms of parameters of the embedded graph.  
The following proposition is a generalisation of Lemma~4.1 of \cite{ACEMS}. 
\begin{proposition} \label{embedding component rank}
If $G \subset \Sigma$, then   $r_{B(G^{\dagger})}(A) = |A|  -\rho_G(  A )  +\rho_G( \emptyset)$.
\end{proposition}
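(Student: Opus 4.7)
The plan is to unfold $r_{B(G^\dagger)}(A)$ via the dual matroid definition, reduce it to a statement purely about connectedness in $G^\dagger$, and then identify that combinatorial quantity with the topological region count $\rho_G$.

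First I would apply the formula $r_{M^*}(A)=|A|+r_M(E\setminus A)-r(M)$ to $M=C(G^\dagger)$. Using that the cycle matroid rank is $r_{C(G^\dagger)}(B)=v(G^\dagger)-c_{G^\dagger}(B)$ for every $B\subseteq E(G^\dagger)$, the two $v(G^\dagger)$ terms cancel, giving
\[
r_{B(G^\dagger)}(A) \;=\; |A| \;-\; c_{G^\dagger}(A^c) \;+\; c(G^\dagger).
\]
So the proposition reduces to proving the single identity $c_{G^\dagger}(B)=\rho_G(E(G)\setminus B)$ for all $B\subseteq E(G)$ (where we identify edges of $G^\dagger$ with edges of $G$). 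Specialising to $B=A^c$ gives the $-\rho_G(A)$ term, and specialising to $B=E(G)$ gives $c(G^\dagger)=\rho_G(\emptyset)$.

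The main step is this topological identity, and it is where the definition of $G^\dagger$ does its work. I would argue as follows. The vertices of $G^\dagger$ correspond bijectively to the components of $\Sigma\setminus G$, i.e.\ to the regions of $G\subset\Sigma$. Two such regions $R_1,R_2$ are joined by an edge of the spanning subgraph $(V(G^\dagger),B)$ exactly when there is an edge $e\in B$ on both their boundaries; equivalently, when $R_1\cup e\cup R_2$ sits inside a single component of $\Sigma\setminus\bigl(V(G)\cup(E(G)\setminus B)\bigr)$. Iterating along paths in $(V(G^\dagger),B)$ shows that two regions lie in the same $G^\dagger$-component (using only edges in $B$) if and only if they lie in the same component of $\Sigma\setminus\bigl(V(G)\cup(E(G)\setminus B)\bigr)$, and conversely every component of this open subset of $\Sigma$ is a union of regions together with the edges in $B$ that separate them. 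This bijection between components gives
\[
c_{G^\dagger}(B) \;=\; k\bigl(\Sigma\setminus(V(G)\cup(E(G)\setminus B))\bigr) \;=\; \rho_G\bigl(E(G)\setminus B\bigr).
\]

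The one place where care is needed, and which I expect to be the main obstacle, is the topological bijection between components of the spanning subgraph of $G^\dagger$ and components of the corresponding subspace of $\Sigma$. Pseudo-surfaces introduce pinch points, and when an edge of $A$ has a pinch-point endpoint one needs to check that removing only that edge (and not the pinch point itself) correctly reflects the $G^\dagger$-adjacency. Working with regular neighbourhoods of vertices and edges, as the paper does elsewhere, makes this routine: deleting an edge from $\Sigma\setminus V(G)$ is local to a regular neighbourhood of the edge, and pinch points are already removed since they are vertices. Once this bijection is in place, assembling the three ingredients yields $r_{B(G^\dagger)}(A)=|A|-\rho_G(A)+\rho_G(\emptyset)$, as required.
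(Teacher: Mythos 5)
Your proposal is correct and follows essentially the same route as the paper: unfold $r_{B(G^\dagger)}(A)$ via the dual-rank formula to get $|A|-c_{G^\dagger}(E\setminus A)+c(G^\dagger)$, then identify $c_{G^\dagger}(E\setminus A)$ with $\rho_G(A)$ by matching components of the spanning subgraph of $G^\dagger$ with components of $\Sigma\setminus(V(G)\cup A)$. Your path-tracing justification of that identification is just a more explicit version of the paper's observation that both quantities count the components of the same subspace $(R\cup(E\setminus A))\setminus V$.
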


\begin{proof}
We have
\begin{multline}\label{e.embedding rew}
r_{B(G^{\dagger})}(A) = r_{(C(G^{\dagger}))^*}(A) = |A|+r_{C(G^{\dagger})}(E\backslash A) -r_{C(G^{\dagger})}(E)
\\
= |A| + v_{G^{\dagger}}( E\backslash A ) -c_{G^{\dagger}}( E\backslash A ) -v_{G^{\dagger}}( E) +c_{G^{\dagger}}( E)
\\
= |A|  -c_{G^{\dagger}}( E\backslash A )  +c_{G^{\dagger}}( E)
=  |A|  -\rho_G(  A )  +\rho_G( \emptyset),
\end{multline}
For the last equality, if $G=(V,E)$ and  $R$ is the set of regions of $G$, then $c_{G^{\dagger}}( E\backslash A )$  is the number of components of $(R \cup (E\backslash A)) \backslash V$.  
On the other hand, $ \rho_G( A )$ is the number of components of  $\Sigma \backslash (V\cup A) =  (R \cup (E\backslash A)) \backslash V$ and it follows that  $c_{G^{\dagger}}( E\backslash A )= \rho_G( A )$. Taking $A=\emptyset$ in this argument gives 
 $c_{G^{\dagger}}( E) = \rho_G( \emptyset)$.
\end{proof}

\begin{theorem}\label{p.mat}
Let $G\subset \Sigma$ be a graph  in a pseudo-surface. Then
$ (B(G^{\dagger}), C(G), \mathrm{id}) $ is a matroid perspective, where $\mathrm{id}$ is the natural identification between the edges of $G$ and $G^{\dagger}$.
\end{theorem}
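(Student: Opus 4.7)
My plan is to verify the defining rank inequality of a matroid perspective,
\[
r_{B(G^\dagger)}(B) - r_{B(G^\dagger)}(A) \;\geq\; r_{C(G)}(B) - r_{C(G)}(A)
\quad\text{for all } A\subseteq B\subseteq E(G),
\]
directly in terms of rank functions. By a telescoping induction on $|B\setminus A|$, it suffices to treat the single-edge step $B = A\cup\{e\}$ with $e\in E(G)\setminus A$.

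For the single-edge step I would translate both sides into graph-theoretic quantities. The $C(G)$-rank change is either $0$ or $1$, taking the value $1$ exactly when the two endpoints of $e$ lie in distinct connected components of the spanning subgraph $(V(G),A)$. Proposition~\ref{embedding component rank} gives $r_{B(G^\dagger)}(A\cup\{e\}) - r_{B(G^\dagger)}(A) = 1+\rho_G(A)-\rho_G(A\cup\{e\})$. Since the open arc $e$ lies in a single region $R$ of $\Sigma\setminus(V(G)\cup A)$, adding $e$ either splits $R$ into two pieces or leaves it connected, so this difference is also $0$ or $1$, equal to $1$ precisely when $e$ does not separate $R$. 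The required inequality therefore reduces to the single implication: \emph{if the endpoints of $e$ lie in different components of $(V(G),A)$, then $e$ does not separate its region $R$.}

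I would prove this implication by contrapositive. Because every pinch point of $\Sigma$ is a vertex of $G$, the region $R$ is an open subset of the bona fide surface $\Sigma\setminus V(G)$ and, in an appropriate regular-neighbourhood picture, is the interior of a connected surface with boundary in which $e$ appears as a properly embedded arc. I would then invoke the standard fact that a properly embedded arc whose endpoints lie on \emph{distinct} boundary components of a connected surface with boundary does not separate it: the two sides of the arc can always be joined via a detour around another boundary component. Consequently, if $e$ separates $R$, its endpoints must share a common boundary circle $C$ of $R$. This $C$ is a boundary circle of a single connected component of the regular neighbourhood $N(V(G)\cup A)$, which deformation-retracts onto a connected component of $(V(G),A)$; hence both endpoints of $e$ lie in that same component, giving the desired contradiction.

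The principal obstacle is the topological fact that an arc joining distinct boundary components of a connected surface with boundary cannot separate it; the remainder is just unpacking the rank functions and a routine induction. The passage from surfaces to pseudo-surfaces adds nothing substantive, since pinch points are by assumption vertices and so are excluded from every region, leaving the whole topological argument inside a genuine surface.
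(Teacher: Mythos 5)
Your proposal is correct and follows essentially the same route as the paper: telescope to a single edge, use Proposition~\ref{embedding component rank} to convert the $B(G^\dagger)$-rank difference into $1+\rho_G(A)-\rho_G(A\cup\{e\})$, and reduce everything to showing that an edge joining two components of $(V(G),A)$ cannot split its region. The only difference is that where the paper simply asserts that a bridge ``bounds exactly one region,'' you justify this via the standard fact that a properly embedded arc with endpoints on distinct boundary components of a connected surface cannot separate it, which is a welcome elaboration rather than a new argument.
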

\begin{proof}
To prove that $ (B(G^{\dagger}), C(G), \mathrm{id}) $ is a matroid perspective, we must show that the rank functions satisfy the condition from  Equation~\eqref{e.mp}. By telescopic summations, it is suffices to do this one edge at a time, that is, to show that
\[ r_{B(G^{\dagger})}(A\cup\{e\})- r_{B(G^{\dagger})}(A) \geq r_{C(G)}(A\cup\{e\})-r_{C(G)}(A),\]
for each $A\subseteq E$ and $e\in E\backslash A$.
By Proposition \ref{embedding component rank}, this reduces to showing that 
\[1    - \rho_G(A\cup\{e\}) + \rho_G( A)  \geq  c_G(A)-c_G(A\cup\{e\}).  \]
Thus we need to show that
\[   c_G(A)-c_G(A\cup\{e\}) =1 \implies  \rho_G(A\cup\{e\}) -  \rho_G(A)  =0   .  \]
If $c_G(A)-c_G(A\cup\{e\}) =1$, then $e$ is a bridge of $(V(G),A)$. 
Also $ \rho_G( A \cup\{e\} )$ is the number of components of  $\Sigma \backslash (V\cup A\cup\{e\}) = (\Sigma \backslash (V\cup A))\backslash \{e\}  $,and  $ \rho_G( A )$ is the number of components of  $\Sigma \backslash (V\cup A) $. Since $e$ is a bridge of $(V(G),A)$, it bounds exactly one region of the drawing of $(V(G),A)$ on $\Sigma$. Thus deleting $e$ from $(\Sigma \backslash (V\cup A))$ will not create any additional connected components, giving $\rho_G(A\cup\{e\}) -  \rho_G(A)  =0 $, as needed.
\end{proof}

We can now extend the Las~Vergnas polynomial to all  graphs in pseudo-surfaces and obtain an expression for it in purely topological graph theory terms. 
\begin{definition} \label{newLVdef}
Let $G=(V,E)$ be a graph   in a pseudo-surface $\Sigma$. Then the {\em Las~Vergnas polynomial}, $L_{G\subset \Sigma}$, is defined by
\begin{multline} \label {LVeq}
L_{G\subset \Sigma}(x,y,z)   :=   T_ {(B(G^{\dagger}),  C(G), \mathrm{id})}  (x,y,z) \\=  \sum_{A \subseteq E} (x-1)^{c(A)-c(E)}(y-1)^{\rho(A)-\rho(\emptyset)}z^{|E|-|A|-\rho( E)+ \rho(A) + c(E)-c(A)}.
\end{multline}
\end{definition}

The following proposition tells us that this extended Las Vergnas polynomial does indeed specialize the original  Las Vergnas polynomial for cellularly embedded graphs.
\begin{proposition}
If  $G \subset \Sigma$ is a cellularly embedded graph, then $ (B(G^{\dagger}),  C(G), \mathrm{id})  =  (B(G^*),  C(G), \mathrm{id})$, and   $L_{G\subset \Sigma}(x,y,z) = L_G  $
\end{proposition}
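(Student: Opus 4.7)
The plan is to reduce the proposition to verifying that, for a cellularly embedded graph $G \subset \Sigma$, the abstract graph $G^{\dagger}$ agrees with the underlying abstract graph of the geometric dual $G^*$ under the natural identification of edges with $E(G)$. Once this is established, the bond matroids $B(G^{\dagger})$ and $B(G^*)$ coincide, hence the two matroid perspectives coincide, and the equality of polynomials follows directly from the two definitions.

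To verify the matching of $G^{\dagger}$ and $G^*$, I would first recall that $G \subset \Sigma$ is cellularly embedded precisely when every region of $\Sigma \backslash G$ is an open disc (a face). The geometric dual $G^*$ then has one vertex per face, and for each edge $e$ of $G$ a dual edge $e^*$ drawn transverse to $e$ joining the vertices of the two (not necessarily distinct) faces adjacent to $e$. Under the natural bijection $E(G) \leftrightarrow E(G^*)$, the abstract graph underlying $G^*$ is thus the graph whose vertex set is indexed by the faces and whose edges, indexed by $E(G)$, connect the face-vertices on either side of each edge.

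Next, I would compare this to the construction of $G^{\dagger}$: its vertices correspond to the regions of $\Sigma \backslash G$, and for each edge $e$ of $G$ it has an edge joining the (possibly equal) vertices for the regions whose boundaries meet a regular neighbourhood of $e$. In the cellular case, regions are faces, so the vertex sets of $G^{\dagger}$ and $G^*$ are in canonical bijection, and for each $e \in E(G)$ the associated edge of $G^{\dagger}$ has the same endpoints as $e^*$. Therefore $G^{\dagger}$ and $G^*$ coincide as abstract graphs under compatible natural identifications of their edge sets with $E(G)$.

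Consequently $B(G^{\dagger}) = B(G^*)$, so $(B(G^{\dagger}), C(G), \mathrm{id}) = (B(G^*), C(G), \mathrm{id})$, and applying the Tutte polynomial of a matroid perspective to both sides together with Definition~\ref{newLVdef} and the definition of $L_G$ yields $L_{G\subset \Sigma}(x,y,z) = L_G(x,y,z)$. The only point worth flagging is the bookkeeping for edges both of whose sides lie on a single face (these contribute loops in $G^*$ and in $G^{\dagger}$) and for parallel dual edges arising from faces sharing several edges of $G$; both constructions treat these identically, so no genuine obstacle arises. The entire argument is essentially an unwinding of definitions.
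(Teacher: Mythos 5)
Your proposal is correct and follows exactly the paper's route: the paper's proof is the one-line observation that $G^{\dagger}=G^*$ as abstract graphs when $G$ is cellularly embedded, and you simply unwind that observation in more detail before deducing the equality of matroid perspectives and polynomials. No discrepancy.
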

\begin{proof}
The result follows immediately from the observation that if $G\subset \Sigma$ is a cellularly embedded graph, then $G^{\dagger}=G^*$ as abstract graphs.
\end{proof}

\subsection{Deletion and contraction}
Since one of the goals is a full deletion-contract reduction for $L_{G\subset \Sigma}$, we must first establish that deletion and contraction of an edge of $G\subset \Sigma$ is compatible with that for the bond matroid of $G^{\dagger}$.  (Since deletion and contraction do not change the underlying abstract graphs, we have that  $C(G)\backslash e =C(G\backslash e)$ and $C(G)/e =C(G/e)$.)
\begin{lemma}\label{l.mg}
Let $G\subset \Sigma$ be a graph in a pseudo-surface, and $e\in E(G)$. Then
\begin{enumerate}
\item  $B(G^{\dagger})\backslash e = B((G\backslash e)^{\dagger})$, and
\item  $B(G^{\dagger})/e = B((G/e)^{\dagger})$.
\end{enumerate}
\end{lemma}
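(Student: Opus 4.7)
The plan is to translate both claimed matroid identities into identities of rank functions, using Proposition~\ref{embedding component rank}, and then further into purely topological statements about the regions of the various embedded graphs. Since a matroid is determined by its rank function and in each of (1) and (2) the two matroids manifestly share the same ground set $E(G)\setminus\{e\}$ (under the natural identification between edges of $G^{\dagger}$ and edges of $G$), it suffices in each part to verify that the two rank functions agree.

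For part (1), the rank function of $B(G^{\dagger})\backslash e$ is simply $r_{B(G^{\dagger})}$ restricted to subsets of $E(G)\setminus\{e\}$, while the rank function of $B((G\backslash e)^{\dagger})$ is given by Proposition~\ref{embedding component rank} applied to the embedded graph $G\backslash e \subset \Sigma$. Comparing the two expressions, the equality reduces to checking that $\rho_G(A) = \rho_{G\backslash e}(A)$ for every $A\subseteq E(G)\setminus\{e\}$ (including $A=\emptyset$). This is immediate from the definition of $\rho$: since $V(G\backslash e) = V(G)$ and $e\notin A$, the spanning subgraphs $(V(G),A)\subset\Sigma$ and $(V(G\backslash e),A)\subset \Sigma$ are literally the same drawing, so they have the same number of complementary regions.

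For part (2), combining the standard matroid-contraction formula $r_{M/e}(A) = r_M(A\cup\{e\}) - r_M(\{e\})$ with Proposition~\ref{embedding component rank} yields
\[
r_{B(G^{\dagger})/e}(A) \;=\; |A| - \rho_G(A\cup\{e\}) + \rho_G(\{e\}),
\]
while a direct application of Proposition~\ref{embedding component rank} to $G/e \subset \Sigma/e$ gives $r_{B((G/e)^{\dagger})}(A) = |A| - \rho_{G/e}(A) + \rho_{G/e}(\emptyset)$. So the task reduces to proving the single identity $\rho_G(A\cup\{e\}) = \rho_{G/e}(A)$ for every $A\subseteq E(G)\setminus\{e\}$ (which also covers $A=\emptyset$, giving $\rho_G(\{e\})=\rho_{G/e}(\emptyset)$). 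I will establish this via the quotient map $q\colon \Sigma \to \Sigma/e$ that defines contraction: because $q$ is a homeomorphism on $\Sigma\setminus e$ and collapses the entire arc $e$ (including its endpoints in $V(G)$) to a single point that lies in $V(G/e)$, the preimage $q^{-1}(V(G/e)\cup A)$ is exactly $V(G)\cup A \cup\{e\}$, so $q$ restricts to a homeomorphism of complements, putting their connected components in bijection.

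The main obstacle is to make sure this topological argument goes through in the full pseudo-surface generality allowed for $G\subset\Sigma$, in particular when $e$ is a loop whose contraction creates a new pinch point on $\Sigma/e$ (as illustrated in Figure~\ref{cont}). The key point is that $\rho$ only counts components of the open set obtained after removing the current vertex set and a chosen set of edges, and on that open set $q$ is a genuine homeomorphism regardless of whether the image of $e$ is a smooth vertex or a pinch point; all the singularity data sits inside $V(G/e)$, which we remove anyway.
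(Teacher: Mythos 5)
Your proposal is correct and follows essentially the same route as the paper: both parts reduce to rank-function identities via Proposition~\ref{embedding component rank}, with part (1) following from $\rho_G(A)=\rho_{G\backslash e}(A)$ (your direct observation is in fact cleaner than the paper's phrasing) and part (2) from the homeomorphism between $(\Sigma/e)\setminus(V(G/e)\cup A)$ and $\Sigma\setminus(V(G)\cup A\cup\{e\})$, which the paper justifies by the same ``collapse the edge'' picture that you phrase via the quotient map. Your explicit remark that any new pinch point lies in $V(G/e)$ and is therefore removed is a worthwhile clarification, but not a different argument.
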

\begin{proof} 
We use here the expression for the rank from Proposition \ref{embedding component rank} that $r_{B(G^{\dagger})}(A) = |A|  -\rho_G(  A )  +\rho_G( \emptyset)$.
 For the first item, note that $B(G^{\dagger})\backslash e$ and $B((G\backslash e)^{\dagger})$ are both on the same set, and  $r_{B(G^{\dagger})}(A)= r_{B((G \backslash e)^{\dagger})}(A)$ when $e\notin A$. This is since $\rho_G( \emptyset)-\rho_G( A)  = \rho_{G\backslash e}( \emptyset)- \rho_{G \backslash e} ( A)$ because if deleting $e$ from $G$ creates a new component this extra component is counted, with opposite signs,  in both $\rho_{G\backslash e}( \emptyset)$ and $\rho_{G \backslash e} ( A)$.

For the second item, again both matroids are on the same set.  For the rank functions, if $e \notin A$,
\begin{equation}\label{raco}
r_{B(G^{\dagger})/e} (  A )  =  r_{B(G^{\dagger})}(A\cup\{e\})- r_{B(G^{\dagger})}(\{e\}) =   |A|  - \rho_G(A\cup \{e\})+   \rho_G (\{ e\})   .
\end{equation}
On the other hand, by Proposition~\ref{embedding component rank}, 
\begin{equation}
r_{B((G/e)^{\dagger})} (  A )  =  |A|-  \rho_{G/e} (A)   + \rho_{G/e}(\emptyset).
\end{equation}
However,  since $(\Sigma /e) \backslash V(G/e)$ and $\Sigma  \backslash (V(G) \cup \{e\}) $ are homeomorphic (to see this, view contraction as the operation of removing a neighbourhood of $e$ then identifying all of the boundary components to a since vertex, as described in Section~\ref{ss.eg}), it follows that $\rho_{G/e}(\emptyset)=\rho_G (\{ e\})$ and  $\rho_{G/e} (A) = \rho_G(A\cup \{e\})$.
\end{proof}

If $G$ is a plane graph, we have that  $e$ is a loop in $G$ if and only if it is a loop in $B(G^*)$, and that $e$ is a bridge (cut-set of size one) in $G$ if and only if it is a isthmus in $B(G^*)$. This however does not hold for the matroid $B(G^{\dagger})$. To find the types of edges of a  graph in a pseudo-surface that correspond to loops and isthmuses in $B(G^{\dagger})$ we generalise loops and bridges by extracting one key feature of each:   
\begin{definition}
Let $G\subset \Sigma$ be a graph in a pseudo-surface, and $e\in E(G)$. Then we say that $e$ is a {\em quasi-loop} if  $\rho( e)>\rho(\emptyset)$, and we say that $e$ is a {\em quasi-bridge} if it is adjacent to exactly one region of $G\subset \Sigma$.
\end{definition}

A quasi-loop in $G$ is a loop, and a bridge in $G$ is a quasi-bridge. However, a loop is not necessarily a quasi-loop and quasi-bridge need not be a bridge (for example a longitudinal loop on a torus is not a quasi-loop, but is quasi-bridge). See Figure~\ref{f.venn}.

\begin{figure}
\centering
\labellist \small\hair 2pt
\pinlabel {loop}  at 90 135
\pinlabel {q.-loop}  at 70 80
\pinlabel {bridge}  at 215 80
\pinlabel {q.-bridge}  at 200 135
\endlabellist
\includegraphics[scale=0.5]{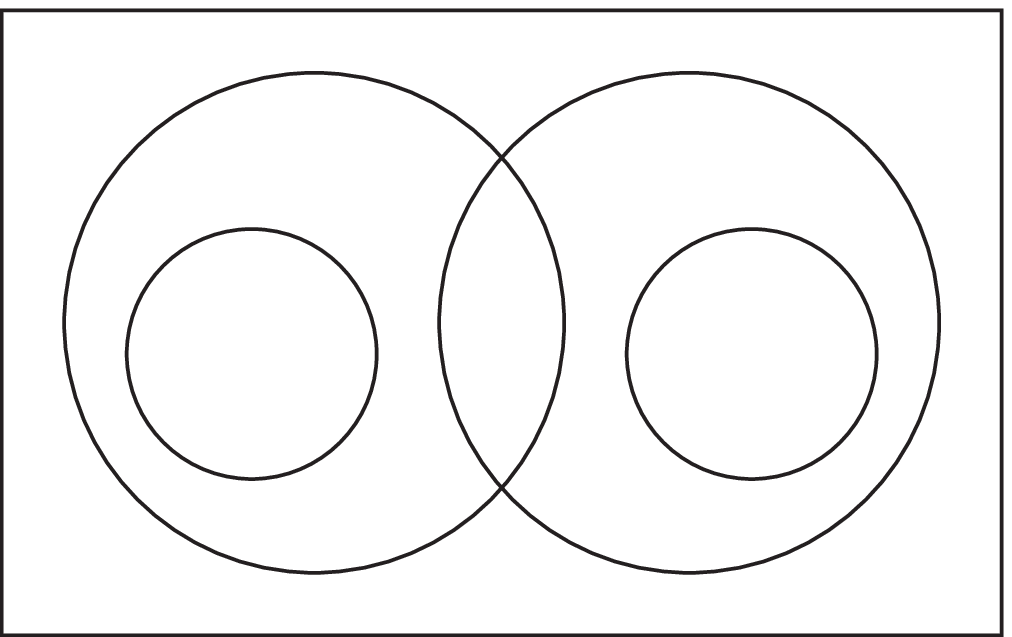}
\caption{A Venn diagram illustrating the various edge types.}
\label{f.venn}
\end{figure}

\begin{proposition}\label{l.mg2}
Let $G\subset \Sigma$ be a graph in a pseudo-surface, and $e\in E(G)$. Then the following hold.
\begin{enumerate}
\item $e$ is a quasi-loop in $G$  if and only if $e$ is a loop in $B(G^{\dagger})$.

\item  $e$ is a quasi-bridge in $G$  if and only if $e$ is an isthmus of $B(G^{\dagger})$.
\end{enumerate}
\end{proposition}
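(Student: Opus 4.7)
My plan is to derive both statements from the rank formula
\[ r_{B(G^{\dagger})}(A) = |A| - \rho_G(A) + \rho_G(\emptyset) \]
of Proposition~\ref{embedding component rank}, combined with the standard matroid-duality exchange between loops and isthmuses. For item (1), I specialise this formula to $A=\{e\}$, obtaining $r_{B(G^{\dagger})}(\{e\}) = 1 - \rho_G(\{e\}) + \rho_G(\emptyset)$. Since the rank of a singleton in any matroid lies in $\{0,1\}$ by axioms \eqref{max1}--\eqref{max2}, we automatically have $\rho_G(\{e\}) \in \{\rho_G(\emptyset), \rho_G(\emptyset)+1\}$, and the rank is zero exactly when $\rho_G(\{e\}) = \rho_G(\emptyset)+1$; this is precisely the quasi-loop condition $\rho_G(\{e\}) > \rho_G(\emptyset)$, and is the definition of $e$ being a loop of $B(G^{\dagger})$.

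For item (2), I would invoke the duality $B(G^{\dagger}) = (C(G^{\dagger}))^*$: in any matroid $M$, an element is an isthmus of $M$ exactly when it is a loop of $M^*$. Hence $e$ is an isthmus of $B(G^{\dagger})$ if and only if $e$ is a loop of the cycle matroid $C(G^{\dagger})$, equivalently $e$ is a loop of the abstract graph $G^{\dagger}$. By the construction of $G^{\dagger}$, the edge of $G^{\dagger}$ corresponding to $e$ has as its endpoints the vertices labelling the regions of $G \subset \Sigma$ that $e$ lies on the boundary of; these two endpoints coincide exactly when $e$ borders only one region, i.e., when $e$ is a quasi-bridge.

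The only point requiring genuine care is unpacking the construction of $G^{\dagger}$ in item (2), so that the graph-theoretic statement ``both sides of $e$ lie in the same region'' is seen to match ``loop of $G^{\dagger}$.'' Beyond this, the argument is a clean combination of the rank formula of Proposition~\ref{embedding component rank} with matroid-dual bookkeeping, and I do not anticipate any substantive obstacle.
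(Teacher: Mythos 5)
Your proof of item (1) is exactly the paper's: specialise the rank formula of Proposition~\ref{embedding component rank} to the singleton $\{e\}$ and read off when the rank is $0$. For item (2), however, you take a genuinely different and considerably more economical route. The paper verifies the isthmus condition directly in the $\rho$-language, i.e.\ it shows that $\rho(A)=\rho(\emptyset)$ implies $\rho(A\cup\{e\})=\rho(\emptyset)$ for all $A$ precisely when $e$ is a quasi-bridge, and this global statement is established by decomposing $\Sigma\setminus V$ into ``bricks'' obtained by cutting along all edges and then reassembling; the non-quasi-bridge direction in particular requires an inductive construction of a disconnecting independent set. You instead observe that $B(G^{\dagger})=(C(G^{\dagger}))^*$, invoke the standard facts that matroid duality is an involution and exchanges loops with isthmuses (coloops), and thereby reduce the whole question to the purely local statement that the edge of the abstract graph $G^{\dagger}$ corresponding to $e$ is a graph loop if and only if $e$ borders a single region --- which is immediate from the construction of $G^{\dagger}$ and is exactly the quasi-bridge condition. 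Both arguments are correct; yours buys brevity and avoids the topological cutting argument entirely, at the cost of leaning on the loop/coloop duality, which the paper never states (with its definition of isthmus via independent sets one should at least note the equivalence with $r(E\setminus\{e\})=r(E)-1$, or cite the standard reference), while the paper's proof has the virtue of staying entirely within the rank formula of Proposition~\ref{embedding component rank} and treating both items uniformly.
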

\begin{proof}  
For the first item, $e$ is a loop in $B(G^{\dagger})$  if and only if $r_{B(G^{\dagger})} (e)= 0$ if and only if $1-\rho(e)+\rho(\emptyset)=0$ if and only if $\rho( e)>\rho(\emptyset)$.

 For the second item we consider a slight generalisation of a polygonal decomposition of  a surface.  Recall that if $G$ is a cellularly embedded  in a surface $\Sigma$ we may create a polygonal decomposition of $\Sigma$ by arbitrarily orienting the edges of $G$ and giving them distinct labels (we identify the labels with the edge names).  The faces of $G$ are then polygons with directed labeled sides, and thus give a polygonal decomposition of $\Sigma$.  The surface $\Sigma$ may be recovered from this set of labeled polygons by identifying sides of polygons with the same labels consistently with the directions of the arrows.  This can be thought of as ``cutting'' the surface along the directed edges of $G$ to form the polygons with directed sides labeled by the edge names.  
 
Now let $G = (V,E) \subset \Sigma$ be a graph in a pseudo-surface.  We consider a slight generalisation of the above polygonal decomposition to obtain a decomposition of  $\Sigma \backslash V$. 
 As before direct and distinctly label the edges of $G$, delete small neighbourhoods of the vertices, and cut the surface along the edges.  The resulting regions are no longer necessarily polygons, but surfaces with labeled directed curves on their boundary components.  Again,  $\Sigma \backslash V$, with the edges of $G$ drawn on it, may be recovered by identifying curves with like labels so that the directions align.  We apply the construction to subsets of the edges as follows. Let $E:=E(G)$. If  $A\subseteq E$ then  $\cut(A)$ is the complex obtained by  arbitrarily  labelling and directing each edge of $G$, removing a small neighbourhood of each vertex, and   ``cutting $\Sigma$ open'' along the edges in $A$.  This results in a set of surfaces with boundaries, where the boundaries have directed arcs labeled by the edges of $A$ on them. These surfaces correspond to the regions of the spanning subgraphs $(V,A) \subset \Sigma$ of  $G \subset \Sigma$.  We have that   $ \Sigma \backslash V$ is obtained from $\cut(A)$ by, for each $a\in A$, identifying the two $a$-labelled boundary arcs such that their directions agree. We call the elements of $\cut(E)$ the {\em bricks} of $G\subset \Sigma$.

Observe that $|\cut(A)|= \rho(A)$, and that   $|\cut(A)|=\rho(\emptyset)$ if and only if $A$ is an independent set of $B(G^{\dagger})$. Any element of $\cut(A)$ may be formed from some subset of the bricks of $\cut(E)$ by identifying appropriate arcs labeled by edges in $E\setminus A$. Also observe that $e\in A$ if and only if $e$ labels two of the arcs on the boundary components of elements of $\cut(A)$. Moreover, $e$ is a quasi-bridge if and only if in $\cut(E)$ both of the boundary arcs labelled $e$ lie on the same brick.

We need to show that $e$ is an isthmus  of  $B(G^{\dagger})$ if and only if  $e$ is a quasi-bridge of $G\subset \Sigma$. That is, we need to show
\[  [ \rho( A) = \rho(\emptyset) \implies  \rho(A\cup \{e\})= \rho(\emptyset)   ]  \iff [e \text{ a quasi-bridge}].  \]
By restricting to the component of $\Sigma\backslash V$ that contains $e$, we can  assume without loss of generality that $\Sigma\backslash V$ is connected, and so it is enough to show that
\[  [ (\Sigma \backslash (V\cup A)  \text{ connected}) \implies  (\Sigma \backslash (V\cup A\cup \{e\})  \text{ connected})   ]  \iff [e \text{ is a quasi-bridge}].  \]
We will prove the equivalent statement,
\[  [ (\Sigma \backslash (V\cup A)  \text{ connected}) \text{ and } (\Sigma \backslash (V\cup A\cup \{e\})  \text{ not connected})   ]  \iff [e \text{ is not a quasi-bridge}].  \]

Suppose that  $\Sigma \backslash (V\cup A) $ is connected, but $\Sigma \backslash (V\cup  A\cup \{e\})$ is not. Then  $e$ lies on the boundary of exactly two elements of $\cut(A\cup \{e\})$, say $\mathcal{C}_1$ and $\mathcal{C}_2$.  Since $\mathcal{C}_1$ and $\mathcal{C}_2$ are formed from two disjoint sets of bricks of $\cut(E)$, it follows that $e$ must lie on two distinct bricks in $\cut(E)$, and is therefore not a quasi-bridge.

Conversely, suppose that $e$ is not a quasi-bridge. Then in $\cut(E)$ there are two distinct bricks $B_1$ and $B_2$ that have an arc labelled  $e$.
 Inductively construct a two component complex and a set of edges $A$  as follows.  Begin by setting $\mathcal{I}:=E$,  $\mathcal{C}_1:=B_1$, $\mathcal{C}_2:=B_2$ and $S:= \cut(E) \backslash \{B_1, B_2\}$.  As long as there is a label, say $b$, that only appears once on the boundary of $\mathcal{C}=\mathcal{C}_1 \sqcup \mathcal{C}_2$,  we update these sets with the following construction.  Notice that if there is such a $b$, then there is a brick $B \in S$ which also has a label of $b$ on its boundary.  Attach $B$ to $\mathcal{C}$ by identifying the $b$-labeled arcs.  Then let $\mathcal{C}_i$ be the result of the attachment if $B$ was attached to $\mathcal{C}_i$, and otherwise $\mathcal{C}_i=\mathcal{C}_i$.  Furthermore, let $\mathcal{I}:=\mathcal{I}\backslash \{b\}$ and $S:=S\backslash\{B\}$. Since there are only a finite number of edges, this process terminates.  When it does, $S=\emptyset$ since $\Sigma$ is connected. Let $\mathcal{C} = \mathcal{C}_1\cup \mathcal{C}_2$  denote the resulting complex, and $\mathcal{C}_e$ be the complex obtained from $\mathcal{C}$ by identifying the $e$-labelled arcs.

We then have that $ \mathcal{C}= \cut(\mathcal{I}) $ is not connected, but $\cut(\mathcal{I} \backslash e)$ is.   We set $A:=\mathcal{I} \backslash e$, and note that it is independent in $B(G^{\dagger})$, while $A\cup\{e\}$ is not. This completes the proof.
\end{proof}

Theorem~\ref{lvdelcont} together with Lemma~\ref{l.mg} and Proposition~\ref{l.mg2} now give the desired complete deletion-contraction relations for the Las~Vergnas polynomial:
\begin{theorem}\label{lvdelcont2}
Let $G\subset \Sigma$ be a graph in a pseudo-surface.  Then the following relations hold:
\begin{enumerate}
\item if $e \in E$ is neither an quasi-loop nor quasi-bridge of $G$, then
\[    L_{G\subset \Sigma}(x,y,z) =  L_{G\backslash e\subset \Sigma}(x,y,z)  +  L_{G/e\subset \Sigma/e}(x,y,z) ; \]

\item if $e \in E$ is a  bridge of $G$, then
\[   L_{G\subset \Sigma}(x,y,z) = x L_{G\backslash e\subset \Sigma}(x,y,z); \]

\item if $e \in E$ is a  quasi-loop of $G$, then
\[   L_{G\subset \Sigma}(x,y,z) = y L_{G\backslash e\subset \Sigma}(x,y,z); \]

\item if $e \in E$ is a quasi-bridge but not a bridge of $G$, then
\[    L_{G\subset \Sigma}(x,y,z) =  zL_{G\backslash e\subset \Sigma}(x,y,z)  +  L_{G/e\subset \Sigma/e}(x,y,z) ; \]

\item if $E(G)=\emptyset$,   then $L_{G\subset \Sigma}(x,y,z) =1$.
\end{enumerate}
\end{theorem}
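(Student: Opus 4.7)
The plan is to prove Theorem~\ref{lvdelcont2} as a direct translation of Theorem~\ref{lvdelcont}, applied to the matroid perspective $(M,M',\mathrm{id}) := (B(G^{\dagger}), C(G), \mathrm{id})$, into the language of graphs in pseudo-surfaces. By Definition~\ref{newLVdef} we have $L_{G\subset\Sigma} = T_{M\to M'}$, and by Theorem~\ref{p.mat} this is indeed a matroid perspective, so Theorem~\ref{lvdelcont} applies. What remains is to rewrite both the operations and the edge-type hypotheses in terms of $G\subset\Sigma$.

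For the operations, Lemma~\ref{l.mg} gives $M\backslash e = B((G\backslash e)^{\dagger})$ and $M/e = B((G/e)^{\dagger})$. Since the cycle matroid depends only on the underlying abstract graph, and the underlying abstract graphs of $G\backslash e$ and $G/e$ are respectively $\widetilde G\backslash e$ and $\widetilde G/ e$ (as noted in Section~\ref{ss.eg}), we also have $M'\backslash e = C(G\backslash e)$ and $M'/e = C(G/e)$. Consequently
\[
T_{M\backslash e\to M'\backslash e} = L_{G\backslash e\subset \Sigma}
\quad\text{and}\quad
T_{M/e\to M'/e} = L_{G/e\subset \Sigma/e},
\]
so the right-hand sides of the five cases of Theorem~\ref{lvdelcont} become exactly the right-hand sides of Theorem~\ref{lvdelcont2}.

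For the hypotheses, I would use Proposition~\ref{l.mg2} to identify loops of $M$ with quasi-loops of $G$ and isthmuses of $M$ with quasi-bridges of $G$; and the elementary facts that isthmuses of $M' = C(G)$ are precisely bridges of $G$ while loops of $M'$ are precisely loops of $G$. Matching these against Theorem~\ref{lvdelcont} case by case: case (1), "neither loop nor isthmus of $M$", becomes "neither quasi-loop nor quasi-bridge of $G$"; case (2), "isthmus of $M'$ (hence of $M$)", becomes "bridge of $G$" (consistent because every bridge is a quasi-bridge, as observed after the definition of quasi-bridge); case (3), "loop of $M$ (hence of $M'$)", becomes "quasi-loop of $G$" (consistent because every quasi-loop is a loop); case (4), "isthmus of $M$ but not of $M'$", becomes "quasi-bridge but not a bridge of $G$"; case (5) is $E=\emptyset$ in both statements.

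There is no serious obstacle here, but one bookkeeping point deserves care: one must check that the four non-trivial cases of Theorem~\ref{lvdelcont2} are mutually exclusive and exhaustive. Mutual exclusivity is immediate from the fact that a single matroid element cannot be simultaneously a loop and an isthmus, so quasi-loop and quasi-bridge are disjoint; exhaustiveness follows because every edge that is not a quasi-loop and not a quasi-bridge corresponds to an element of $M$ that is neither loop nor isthmus, which is exactly case (1) of Theorem~\ref{lvdelcont}. In particular, this correctly handles the case of a loop of $G$ that is not a quasi-loop: if it is a quasi-bridge it falls into case (4), and otherwise into case (1), matching the matroid-perspective reduction.
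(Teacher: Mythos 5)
Your proposal is correct and is exactly the paper's argument: the paper derives Theorem~\ref{lvdelcont2} by combining Theorem~\ref{lvdelcont} with Lemma~\ref{l.mg} and Proposition~\ref{l.mg2} in precisely the way you describe, and your case-by-case matching (including the observation that a loop of $G$ which is not a quasi-loop falls into case (1) or (4) according to whether it is a quasi-bridge) fills in the bookkeeping the paper leaves implicit.
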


Note that for plane graphs, loops are quasi-loops  and bridges are quasi-bridges, so, for plane graphs, the polynomial defined by the relations in Theorem~\ref{lvdelcont2} is indeed  the classical Tutte polynomial.

\section{Relations with other topological Tutte polynomials} \label{sec:polys}
In this section we  restrict to graphs in surfaces.
We consider two other notable topological Tutte polynomials, that is, polynomials of  graphs in surfaces that generalize the classical Tutte polynomial.  These are the 2002 ribbon graph polynomial of Bollob\'as and Riordan, $R_G$, from \cite{BR2} (which subsumes the 2001 version for orientable ribbon graphs from \cite{BR1}), and the 2011 Krushkal polynomial, $K_G$, from \cite{Kr} for graphs arbitrarily embedded in orientable surfaces (which was extended to non-orientable surfaces by Butler in \cite{But}).  We now determine the relations between these polynomials and the Las Vergnas polynomial, both the original version for cellularly embedded graphs (which was first done in \cite{ACEMS}), and the new version for arbitrarily embedded graphs.  We begin by recalling the definitions of $R_G$ and $K_G$.

   \begin{definition}\label{defBR}
Let $G$ be an cellularly embedded graph, or, equivalently, a ribbon graph.  Then the \emph{ribbon graph polynomial} or  {\em Bollob\'as-Riordan polynomial}, $R_G(x,y,z) \in \mathbb{Z}[x,y,z]$,  is defined by
\[   R_G(x,y,z) = \sum_{A \subseteq E( G)}   (x - 1)^{r_G( G ) - r_G( A )}   y^{n_G(A)} z^{c_G(A) - f_G(A) + n_G(A)}  . \]
  \end{definition}
Noting that exponent of $z$ is equal to the Euler genus $\gamma(A)$, the ribbon graph polynomial may be rewritten as

\begin{equation}\label{e.BRgenus1}
 R_G(x,y,z)=  \sum_{A \subseteq E( G)}  (x - 1)^{r_G(E) - r_G(A)}  y^{n_G(A) }    z^{ \gamma_G (A)} .
  \end{equation}

Although $R_G$ often appears with a fourth variable that records the orientability of each spanning  ribbon subgraph, here we omit it as it plays no role in our results.   Note that the classical  Tutte polynomial, $T_G$, is a specialisation of $R_G$ as
$ T_G(x,y)=R_G(x,y-1,1)$, and that  $T_G(x,y)=R_G(x,y-1, z)$ when $G$ is a plane graph (since when $G$ is plane  the Euler genus of all of its spanning ribbon subgraphs is zero).

Comparing the state sums  for $L_G$ and $R_G$ from Equations \eqref{tidyL} and \eqref{e.BRgenus1} illuminates the key differences and similarities between these two topological Tutte polynomials in the case of cellularly embedded graphs: $L(G)$ records information about the  spanning subgraphs of the dual, whereas $R(G)$ does not.  Furthermore, Equation \eqref{e.BRgenus1} together with Equation~\eqref{LtoT} gives that $R_G(x,y,1)= y^{\gamma(G)} L_G(x,y+1,1/y)$, when $G$ is cellularly embedded.  However, Askanazi et al. have given examples in \cite{ACEMS} suggesting  that it is unlikely that either of $R_G$ or $L_G$ may be recovered from the other.

\bigskip

We now turn our attention to the Krushkal polynomial which was defined in \cite{Kr} for graphs embedded (not necessarily cellularly)  in orientable surfaces, and in \cite{But} for graphs in non-orientable surfaces.
For this, recall from Section~\ref{ss.eg} that $N(X)$ denotes a regular neighbourhood of a subset $X$ of a surface $\Sigma$, and $k(\Sigma)$ is its number of connected components. 
The neighbourhood  $N(X)$ is itself a surface and so we can consider topological properties of this surface, such as its Euler genus.

\begin{definition}
Let $G\subset \Sigma$ be a graph in a surface. Then the {\em Krushkal polynomial}, $K_{G\subset \Sigma} ( x,y,a,b )\in \mathbb{Z}[a,b,x^{1/2},y^{1/2}]$, is defined by
\[  K_{G\subset \Sigma} ( x,y,a,b ) :=  \sum_{A\subseteq E(G)}  x^{c(G)-c(A)} y^{ k(\Sigma \backslash A)  - k(\Sigma)} a^{\frac{1}{2}\gamma(N(A))} b^{\frac{1}{2}\gamma(\Sigma \backslash A)}   .   \]
\end{definition}
We follow \cite{But} and use the form of the exponent of $y$ from the proof of Lemma~4.1 of \cite{ACEMS}  rather than the homological definition given in \cite{Kr}.

Krushkal showed for orientable surfaces \cite{Kr}, and Butler \cite{But} for non-orientable surfaces, that when $G\subset \Sigma$ is cellularly embedded, then the ribbon graph polynomial $R_G$ can be recovered from $K_{G\subset \Sigma} $ as
\[   R_G(x,y,z) = y^{\frac{1}{2}\gamma(G)}   K_{G\subset \Sigma}  (x-1,y,yz^2,y^{-1}).   \]
Furthermore, it was shown in \cite{ACEMS} for the orientable case, and \cite{But} for the non-orientable case, that the Las~Vergnas polynomial for cellularly embedded graphs can also be recovered from the Krushkal polynomial, here as
\begin{equation}\label{e.lvkrch}   L_G(x,y,z) = z^{\frac{1}{2}\gamma(G)}   K_{G\subset \Sigma}  (x-1,y-1,z^{-1},z).   \end{equation}
We can extend this relation to the full  Krushkal polynomial:
\begin{theorem}\label{t.lvkr}
Let  $G=(V,E)\subset \Sigma$ be a  graph in a surface.
Then
\begin{equation}\label{e.lvkr}  L_{(G, \Sigma)}(x,y,z) =  z^{\frac{1}{2}(\gamma(N(E)) -\gamma(\Sigma \backslash E))  }   K_{G\subset \Sigma}  (x-1,y-1,z^{-1},z).  \end{equation}
\end{theorem}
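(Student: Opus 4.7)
The plan is to prove Equation~\eqref{e.lvkr} by matching the state-sum expansions on both sides monomial-by-monomial over $A \subseteq E$, generalising to the pseudo-surface setting the argument behind the cellular identity~\eqref{e.lvkrch}. Writing out $K_{G\subset\Sigma}(x-1,y-1,z^{-1},z)$ and distributing the normalising factor into the sum, the contribution of $A$ on the right-hand side becomes
\[ (x-1)^{c(A)-c(G)}(y-1)^{k(\Sigma\backslash A)-k(\Sigma)} z^{\tfrac12\bigl(\gamma(N(E))-\gamma(\Sigma\backslash E)-\gamma(N(A))+\gamma(\Sigma\backslash A)\bigr)}, \]
which must be identified with the $A$-th term of $L_{G\subset\Sigma}$ in Definition~\ref{newLVdef}. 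The $(x-1)$- and $(y-1)$-exponents match for essentially definitional reasons: $c(E)=c(G)$ gives the first, while for the second, $\rho(A)=k(\Sigma\backslash A)$ (both count the components of $\Sigma$ with a regular neighbourhood of the spanning subgraph $(V,A)$ removed, as in the proof of Proposition~\ref{embedding component rank}), combined with $\rho(\emptyset) = k(\Sigma)$, gives equality of exponents.

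The substantive step is matching the $z$-exponents. For this I would apply Euler's formula to each of the two complementary subsurfaces $N(A)$ and $\Sigma\backslash A$. They share their boundary circles and together reassemble $\Sigma$, so additivity of Euler characteristic yields $\chi(\Sigma) = \chi(N(A)) + \chi(\Sigma\backslash A)$. Expanding each side using $\chi=2k-\gamma-b$ for surfaces with boundary, and then using Euler's formula for the ribbon graph $N(A)$ in the form $v(G)-|A|+b(N(A))=2c(A)-\gamma(N(A))$ to eliminate the common boundary count, produces the identity
\[ \gamma(N(A))-\gamma(\Sigma\backslash A) = 2c(A) - 2\rho(A) + 2|A| - 2v(G) + 2k(\Sigma) - \gamma(\Sigma). \]
The trailing summand $-2v(G)+2k(\Sigma)-\gamma(\Sigma)$ does not depend on $A$, so it cancels when one forms the difference between the $A=E$ and general-$A$ instances of this identity. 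What remains is exactly $2\bigl[(|E|-|A|)+(\rho(A)-\rho(E))+(c(E)-c(A))\bigr]$, whose half is precisely the $z$-exponent appearing in Definition~\ref{newLVdef}. The prefactor $z^{\frac{1}{2}(\gamma(N(E))-\gamma(\Sigma\backslash E))}$ is thus seen to be exactly the correction needed to absorb the ambient genus contributions of $\Sigma$, playing the same normalising role that $z^{\frac{1}{2}\gamma(G)}$ plays in~\eqref{e.lvkrch}.

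The main (and essentially only) obstacle is careful Euler-characteristic bookkeeping in the non-cellular setting: $N(A)$ is a surface with boundary rather than a closed one, so both $b(N(A))$ and $b(\Sigma\backslash A)$ must be tracked through the calculation before one observes that they coincide and cancel. Once the displayed identity for $\gamma(N(A))-\gamma(\Sigma\backslash A)$ is in hand, the rest of the argument is pure algebraic substitution into the state sums.
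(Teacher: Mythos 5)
Your proposal is correct and follows essentially the same route as the paper: both arguments compare the two state sums exponent by exponent, with the $x$ and $y$ exponents matching for definitional reasons and the $z$ exponent requiring Euler-formula bookkeeping in which the shared boundary counts $b(N(V\cup A))=b(\Sigma\backslash A)$ cancel. The only (harmless) difference is internal to that last step: the paper computes the relevant Euler-characteristic differences edge by edge via cellulations, whereas you derive a closed-form identity for $\gamma(N(A))-\gamma(\Sigma\backslash A)$ from the additivity $\chi(\Sigma)=\chi(N(A))+\chi(\Sigma\backslash A)$ together with the ribbon-graph Euler formula; your computation checks out.
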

\begin{proof}
We proceed by comparing the exponents in the expression for $L_G$ from Definition \ref{newLVdef} with those on the right-hand side of Equation \eqref{e.lvkr}, which is
\begin{equation}\label{Keq}
\sum_{A \subseteq E}  {(x-1)^{c(A)-c(E)}(y-1)^{k(\Sigma \backslash A) -k(\Sigma)} z^{\frac{1}{2} (\gamma (\Sigma \backslash A) - \gamma(N(A)) + \gamma (N(G)) -\gamma (\Sigma \backslash E))}}.
\end{equation}

 The exponents of $x-1$ in Equations \eqref{LVeq} and \eqref{Keq} are the same. Since $\Sigma$ is a surface, deleting vertices does not change numbers of connected components,  and so the exponents of $y-1$ in Equations \eqref{LVeq} and \eqref{Keq} are the same. We now examine the exponents of $z$.

 Noting that $c(E)=k(N(V\cup E))$ and $c(A)=k(N(V\cup A))$, and since $\Sigma$ is a surface,  $\rho(E)= k(\Sigma\backslash E)$ and  $\rho(A) = k(\Sigma\backslash A)$ the exponent of $z$ in $L_{(G, \Sigma)}(x,y,z)$ is
\begin{equation}\label{eq.lvkr1}
 |E|-|A|-k(\Sigma\backslash E)+k(\Sigma\backslash A)+k(N(V\cup E))-k(N(V\cup A)).
\end{equation}

On the other hand, expanding the $z$ exponent  in Equation \eqref{Keq}  in terms of the Euler characteristic gives
\[\begin{array}{rclclcl}
&&k(N(V\cup E)) &-&\frac{1}{2}\chi(N(V\cup E))&-&\frac{1}{2}b(N(V\cup E)) \\
&-&k(\Sigma \backslash E) &+&\frac{1}{2}\chi(\Sigma \backslash E)&+&\frac{1}{2}b(\Sigma \backslash E)
\\ &  -&k(N(V\cup A)) &+&\frac{1}{2}\chi(N(V\cup A))&+&\frac{1}{2}b(N(V\cup A))
\\ &+&  k(\Sigma\backslash A) &-&\frac{1}{2}\chi(\Sigma\backslash A)&-&\frac{1}{2}b(\Sigma\backslash A). \\
\end{array}\]
The $b$ terms in this expression cancel since  $\Sigma\backslash A$ and $N(V\cup A)$ have identical boundary components for each $A\subseteq E(G)$.
To show that the above sum is equal to Equation~\eqref{eq.lvkr1} we show that $\chi(N(V\cup A)) -\chi(N(V\cup E))=|E|-|A|$ and that $\chi(\Sigma \backslash E)-\chi(\Sigma\backslash A)=|E|-|A|$.   It suffice to show this one edge at a time, i.e. to show that $ \chi(N(V\cup (X\cup \{e\}))) =   \chi(N(V\cup X))-1$, and
 $\chi(\Sigma \backslash (X\cup \{e\})) = \chi(\Sigma\backslash X)+1$,
 for any $X \subseteq E$.  This follows by recalling that   $\chi(\Sigma)=v_t-e_t+f_t$, where $v_t$, $e_t$, and $f_t$ are the numbers of vertices, edges, and faces, respectively, in any  cellulation of $\Sigma$, and then noting that extending a cellulation of $N(V\cup X)$ to $N(V\cup X \cup \{e\})$  changes the Euler characteristic by 1, as can easily be seen from Figure~\ref{f.2}.   Similarly,   $\chi(\Sigma \backslash E)-\chi(\Sigma\backslash A)=|E|-|A|$, and thus the exponents of $z$ in Equations \eqref{LVeq} and \eqref{Keq} agree, completing the proof.
\end{proof}

\begin{figure}
\centering
\hspace{1cm}
\subfigure[A triangulation of $N(V\cup X)$. ]{
\includegraphics[scale=.8]{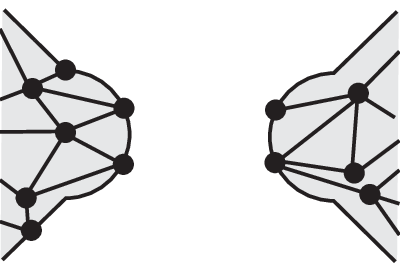}
\label{f.2b}
}
\hspace{2cm}
\subfigure[A triangulation of $N(V\cup X\cup\{e\})$. ]{
\includegraphics[scale=.8]{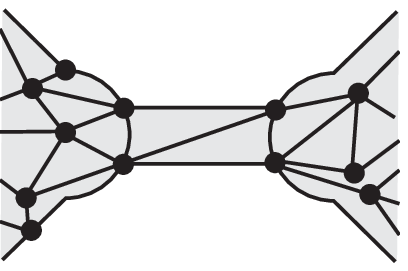}
\label{f.2c}
}
\caption{Triangulations of neighbourhoods of $(V,X)$ and $(V,X\cup \{e\})$ when $e\notin X$.}
\label{f.2}
\end{figure}

Observe that when $G\subseteq \Sigma$ is cellularly embedded then $\gamma(\Sigma \backslash E)=0$, and  Equations \eqref{e.lvkr} and \eqref{e.lvkrch}  agree.

It is likely that  the Bollob\'as-Riordan and Krushkal polynomials can be extended to graphs in pseudo-surfaces in such a way that the identity in Theorem~\ref{t.lvkr} still holds. We leave doing this as an open problem.

\section{New perspectives on Las~Vergnas' low genus work with Eulerian circuits}\label{s.lg}
Michel Las Vergnas also worked with cellularly embedded graphs via their Tait graphs.  We now use some tools recently developed to study twisted duality (see \cite{EMM2,EMMbook}) to build on Las Vergnas' foundations in this area.

 In this section we will work entirely with cellularly embedded graphs and ribbon graphs (which are equivalent). We recall that if $G=(V,E)$ is a ribbon graph and $A\subseteq E$ then $f(A)$ is the number of boundary components of the spanning ribbon subgraph $(V,A)$, and $\gamma(A)$ is its Euler genus. The parameters $f(A)$ and $\gamma(A)$ are most easily described in terms of ribbon graphs, but they can be computed in terms of cellularly embedded graphs: given $G\subset \Sigma$, describe $G$ as a ribbon graph,  construct its spanning ribbon subgraph $G'=(V,A)$, then translate back to the language of cellularly embedded graphs to get $G'   \subset \Sigma'$. Then $f(A)$ is  the number of faces of $G'$, and $\gamma(A)=\gamma(\Sigma')$. In particular, it is important to remember that $f(A)$ may  not be the number of regions of $G'\backslash A^c$, and similarly  $\gamma(A)$ need not equal $\gamma(\Sigma)$.

\subsection{Graph states and Tait graphs}

We first briefly recall some terminology.  Further details, including definitions of vertex and graph states, as well as medial and Tait graphs, relevant to this context,  may be found in \cite{EMM2,EMMbook}.

A \emph{vertex state} at a vertex $v$  of an abstract $4$-regular graph $F$ is a partition, into pairs, of the edges incident with $v$.
If $F$ is an cellularly embedded $4$-regular graph, a vertex state is simply the result of replacing a small neighbourhood of $v$  by a choice of one of the  configurations in Figure~\ref{c1.vstate.f1}.
 \begin{figure}[ht]
\centering
\begin{tabular}{ccccccc}
\labellist \small\hair 2pt
\pinlabel {$v$}  at 36 20
\endlabellist
\includegraphics[scale=.45]{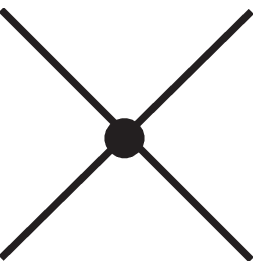}
 & \quad \raisebox{5mm}{$\longrightarrow$} \quad  & \includegraphics[scale=.45]{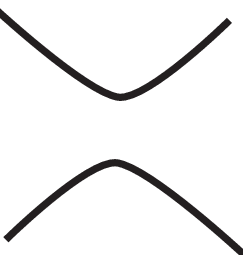} &\quad \raisebox{5mm}{,}\quad & \includegraphics[scale=.45]{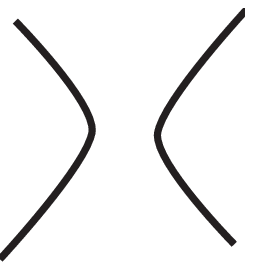} &\quad \raisebox{5mm}{or} \quad  &\includegraphics[scale=.45]{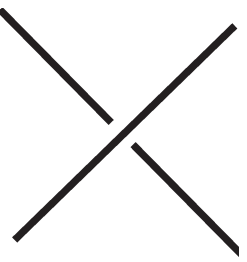} \\
\end{tabular}
\caption{The  vertex states of a vertex $v$ of a graph.}
\label{c1.vstate.f1}
\end{figure}

 If $G$ is a cellularly embedded graph and  $G_m$ its medial graph, checkerboard coloured so that faces containing a vertex of $G$ are coloured black, then we may use the checkerboard colouring to distinguish among the vertex states, naming them a {\em white split}, a {\em black split} or a {\em crossing}, as in Figure~\ref{c1.vstate.f2}.
\begin{figure}[ht]
\centering
\begin{tabular}{ccccccc}
\labellist \small\hair 2pt
\pinlabel {$v$}  at 36 20
\endlabellist
\includegraphics[scale=.45]{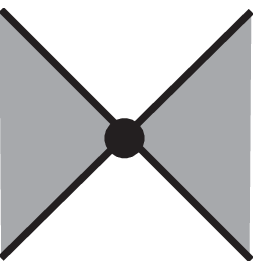}
 & \quad \raisebox{5mm}{$\longrightarrow$} \quad  & \includegraphics[scale=.45]{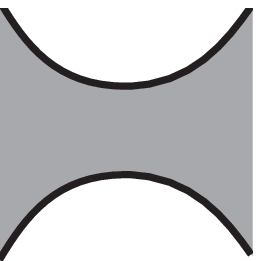} &\hspace{5mm}  & \includegraphics[scale=.45]{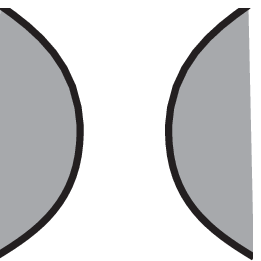} &\hspace{5mm}  &\includegraphics[scale=.45]{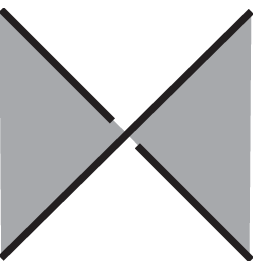} \\
in $G_m$ && white split && black split && crossing.
\end{tabular}
\caption{The three vertex states of a vertex $v$ of a  checkerboard coloured medial graph.}
\label{c1.vstate.f2}
\end{figure}

A {\em graph state} $s$ of  any 4-regular graph $F$ is a choice of vertex state at each of its vertices. Each graph state corresponds to a specific family of edge-disjoint cycles in $F$.  We call these cycles the {\em components of the state}, denoting the number of them by $c(s)$.

A cellularly embedded graph $G$ is a {\em Tait graph} of a cellularly embedded $4$-regular graph $F$ if $F$ is the medial graph of $G$. A cellularly embedded checkerboard colourable 4-regular graph is always a medial graph and will have exactly two (possibly isomorphic) Tait graphs, one corresponding to each colour in the checkerboard colouring as in the following definition. We will generally view Tait graphs as ribbon graphs.
\begin{definition}\label{c1.s5.ss2.d1}
Let  $F$ be a checkerboard coloured $4$-regular cellularly embedded graph. Then
\begin{enumerate}
\item the \emph{blackface graph}, $F_{bl}$, of $F$ is the  embedded graph constructed  by  placing one vertex in each black face and adding an edge between two of these vertices whenever the corresponding regions meet at a vertex of $F$;
\item the \emph{whiteface graph}, $F_{wh}$, is constructed analogously by placing vertices in the white faces.
\end{enumerate}
\end{definition}
Note that $F_{bl}$, $F_{wh}$ are the two Tait graphs of $F$, and that choosing the other checkerboard colouring just switches the names of  $F_{bl}$ and $F_{wh}$. An example is given in Figure~\ref{c1.f6}.

\begin{figure}
\centering
\subfigure[A checkerboard coloured 4-regular embedded graph $F \subset S^2$. ]{
\quad \quad\includegraphics[scale=.8]{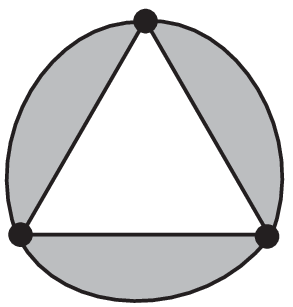}\quad\quad
\label{c1.f6b}
}
\hspace{10mm}
\subfigure[The blackface graph $F_{bl} \subset S^2$.]{
 \quad\includegraphics[scale=.9]{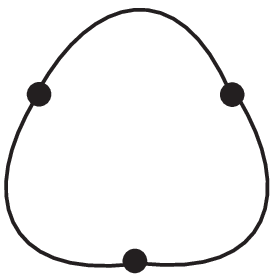}\quad
\label{c1.f6d}
}
\hspace{10mm}
\subfigure[The whiteface graph $F_{wh} \subset S^2$.]{
 \quad\includegraphics[scale=.8]{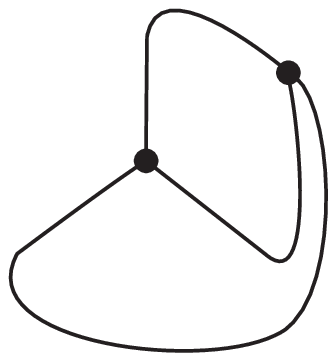}\quad
\label{c1.f6f}
}
\caption{The blackface and whiteface graph. Note that $F$ is the medial graph of both its blackface graph and its whiteface graph.}
\label{c1.f6}
\end{figure}

\subsection{Circuits in medial graphs}\label{ss.lv1}

We begin with the main theorem of \cite{Las78} which is a formula for the number of components in a graph state without crossings of a checkerboard coloured 4-regular graph (or equivalently, a checkerboard coloured medial graph) cellularly embedded in the sphere, torus, or real projective plane. We note that in the language of \cite{Las78}, a graph state with $k$-components is called an Eulerian $k$-partition. Also, the labelling of vertex states as black or white in \cite{Las78} is the reverse  from that used in this paper.

\begin{theorem}[Las Vergnas \cite{Las78}]\label{LV components}  Let $F$ be a checkerboard coloured $4$-regular graph cellularly embedded in the sphere, torus, or real projective plane; and let $s$ be a graph state without crossings.  Then the number of components of $s$  is equal to
\begin{equation}\label{LV noncrossing}
\min \{ (|B| +r(F_{wh})-2r_{F_{wh}}(B)+1), \quad (v(F)-|B|+ r(F_{bl}) -2r_{F_{bl}}(W)+ 1) \},
\end{equation}
where $B$ is the set of edges of $F_{wh}$ corresponding to vertices of $F$ with a black split in the graph state, and where $W$ is the set of edges of $F_{bl}$ corresponding to vertices of $F$ with a white split in the graph state when we view $F$ as the medial graph of both $F_{wh}$ and $F_{bl}$.
\end{theorem}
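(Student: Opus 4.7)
The plan is to translate the matroidal right-hand side of \eqref{LV noncrossing} into topological ribbon-graph data, and then to exploit a genus bound forced by the low-genus hypothesis on $\Sigma$. Viewing $F_{wh}$ and $F_{bl}$ as ribbon graphs cellularly embedded in the same surface $\Sigma$ as $F$, associate to the non-crossing state $s$ the spanning ribbon subgraphs $G_B := (V(F_{wh}), B) \subseteq F_{wh}$ and $G_W := (V(F_{bl}), W) \subseteq F_{bl}$. Using the partial-duality / twisted-duality description of non-crossing medial-graph states from \cite{EMM2,EMMbook} (by which the cycles of $s$ are the vertex discs of the partial dual $F_{wh}^{B}$, equivalently the boundary circles of $G_B$), together with the ribbon-graph cospanning identity $f_H(A) = f_{H^*}(E \setminus A)$ applied with $H = F_{wh}$ (using $F_{bl} \cong F_{wh}^*$ and $W = E \setminus B$ under the natural identification), one obtains
\[ c(s) \;=\; f_{F_{wh}}(B) \;=\; f_{F_{bl}}(W). \]

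Expanding $r_H(A) = v(H) - c_H(A)$ and invoking Euler's formula $v(H) - |A| + f_H(A) = 2 c_H(A) - \gamma_H(A)$ for the spanning ribbon subgraph $(V(H),A)$ yields the identity
\[ |A| + r(H) - 2 r_H(A) + 1 \;=\; f_H(A) + \gamma_H(A) + (1 - c(H)). \]
Taking $(H,A) = (F_{wh}, B)$ and $(F_{bl}, W)$, and assuming as a standing hypothesis that $F_{wh}$ and $F_{bl}$ are connected (the general case is handled component by component), the two expressions inside the minimum in \eqref{LV noncrossing} become $c(s) + \gamma_{F_{wh}}(B)$ and $c(s) + \gamma_{F_{bl}}(W)$. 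Each is at least $c(s)$, so the minimum equals $c(s)$ precisely when at least one of $\gamma_{F_{wh}}(B)$ and $\gamma_{F_{bl}}(W)$ vanishes.

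The main obstacle is this last zero-genus alternative, which I would obtain from the inequality
\[ \gamma_{F_{wh}}(B) + \gamma_{F_{bl}}(W) \;\leq\; \gamma(\Sigma). \]
Realised inside $\Sigma$, the ribbon graphs $G_B$ and $G_W$ sit as complementary pieces whose common boundary is precisely the union of the $c(s)$ state cycles, so $\chi$-additivity along this boundary gives
\[ \chi(\Sigma) \;=\; 2\bigl(c(G_B) + c(G_W)\bigr) - \bigl(\gamma_{F_{wh}}(B) + \gamma_{F_{bl}}(W)\bigr) - 2c(s). \]
The bipartite incidence graph on the components of $G_B$ and $G_W$, with one edge per state cycle, has at most $k(\Sigma)$ connected components (two vertices lying in the same component of $\Sigma$ are joined by a path crossing state cycles), hence $c(G_B) + c(G_W) - c(s) \leq k(\Sigma)$, which rearranges against $\chi(\Sigma) = 2k(\Sigma) - \gamma(\Sigma)$ to the desired genus bound. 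A case analysis on $\Sigma$ then finishes the proof: on the sphere the sum is $0$; on the real projective plane a non-negative integer sum bounded by $1$ forces a zero summand; on the torus, orientability of $\Sigma$ makes each of $\gamma_{F_{wh}}(B)$ and $\gamma_{F_{bl}}(W)$ even, so an even sum bounded by $2$ again has a zero summand. In every case the minimum in \eqref{LV noncrossing} equals $c(s)$.
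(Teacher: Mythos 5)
Your argument is correct, and its skeleton coincides with the chain the paper assembles around this statement: your identity $c(s)=f_{F_{wh}}(B)=f_{F_{bl}}(W)$ is Proposition~\ref{our components}, your Euler-formula rewriting of each term in the minimum as $f+\gamma$ (up to the $1-c(H)$ correction) is exactly the computation in the proof of Theorem~\ref{LV components2}, and the remaining point --- that at least one of $\gamma_{F_{wh}}(B)$ and $\gamma_{F_{bl}}(W)$ vanishes on the three listed surfaces --- is Corollary~\ref{low genus}. Where you genuinely differ is in how you establish that last alternative. The paper argues qualitatively that the edge-disjoint subgraphs $F_{wh}-W$ and $F_{bl}-B$ cannot both contain fundamental cycles of the torus or projective plane; you instead prove the quantitative bound $\gamma_{F_{wh}}(B)+\gamma_{F_{bl}}(W)\le\gamma(\Sigma)$ by decomposing $\Sigma$ into the two complementary ribbon neighbourhoods glued along the $c(s)$ state circles, computing $\chi$ additively, and bounding $c(G_B)+c(G_W)-c(s)$ by $k(\Sigma)$ via the bipartite incidence graph. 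This buys you something real: the bound is sharper and more self-contained than the fundamental-cycle argument, it isolates exactly why the Klein bottle ($\gamma=2$ but non-orientable, so $1+1$ is possible) and all higher-genus surfaces escape the theorem, and it is the inequality counterpart of the exact genus-additivity identity the paper proves later for quasi-trees in Theorem~\ref{LVpt}. Two small remarks: the parenthetical appeal to partial duality is an unnecessary detour, since the ``snipping'' picture used to prove Proposition~\ref{our components} already gives $c(s)=f_{F_{bl}}(W)$ directly; and your standing connectedness hypothesis is automatic here, because a cellular embedding in a connected surface forces $F$, and hence $F_{wh}$ and $F_{bl}$, to be connected.
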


The strength of this formula is that it computes a topological property from  readily attainable combinatorial quantities.

We now give a related formula for the number of components of a graph state, with a much shorter proof than the original, that does hold for every surface.  We then use it to explain why the formula of Theorem \ref{LV components} fails on surfaces other than the sphere, torus, or real projective plane.

\begin{proposition}\label{our components}
Let $F$ be a $4$-regular connected checkerboard coloured cellularly embedded graph, and let $s$ be a graph state without crossings.  Then the number of components in $s$ is
\begin{equation}\label{our noncrossing}
f_{F_{bl}}(W)=
2c_{F_{bl}}(W)-\gamma_{F_{bl}}(W) +|W|-v(F_{bl}),
\end{equation}
where $F_{bl}$ is viewed as a ribbon graph, and $W$ is the set of edges of $F_{bl}$ corresponding to vertices of $F$ with a white split in the graph state $s$.
\end{proposition}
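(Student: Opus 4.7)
The plan is to prove the two equalities in Equation~\eqref{our noncrossing} separately: first identify the number of components of the state with $f_{F_{bl}}(W)$ using the standard medial/ribbon-graph correspondence, and then derive the closed form from Euler's formula.

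First I would translate the combinatorial data of the state into ribbon-graph data. Since $F$ is the medial graph of the ribbon graph $F_{bl}$, the vertices of $F$ are in bijection with the edges of $F_{bl}$. A graph state without crossings assigns to each vertex of $F$ either a white split or a black split; letting $W$ be the set of vertices assigned a white split, we obtain a subset $W \subseteq E(F_{bl})$, and hence a spanning ribbon subgraph $H := (V(F_{bl}), W)$ of $F_{bl}$. Under a suitable convention, a white split at a vertex $v$ of $F$ corresponds to keeping the 1-band of $F_{bl}$ dual to $v$ (it routes the curves along the edge), and a black split corresponds to deleting that edge.

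The key step is then the following standard correspondence (compare \cite{EMM2,EMMbook}): under the identification above, the components of $s$ coincide, as cycles, with the boundary components of the ribbon graph $H$. Concretely, if one draws the checkerboard-coloured medial $F$ on top of $F_{bl}$ in the usual way, the cycles of $s$ trace out exactly the boundary curves of the surface $H$, because a white split at $v$ causes the state curves to hug the 1-band corresponding to $v$ (contributing to the boundary of $H$), while a black split bypasses that 1-band (so the curves continue along the boundaries coming only from vertices and kept edges of $F_{bl}$). Thus $c(s) = f_{F_{bl}}(W)$, giving the first equality in Equation~\eqref{our noncrossing}.

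The second equality is then just Euler's formula for ribbon graphs applied to $H$: recalling that $v(H) - e(H) + f(H) = 2c(H) - \gamma(H)$ and that $v(H) = v(F_{bl})$ and $e(H) = |W|$, we get
\begin{equation*}
f_{F_{bl}}(W) = 2c_{F_{bl}}(W) - \gamma_{F_{bl}}(W) + |W| - v(F_{bl}),
\end{equation*}
as required. The main obstacle is giving a clean justification of the medial/boundary correspondence claim; I would either cite the relevant statement from \cite{EMM2,EMMbook} or provide a short local picture-based argument at each vertex of $F$, checking that replacing a crossing by a white or black split deforms the state curves to the boundary of the corresponding band decomposition of $H$. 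Once that correspondence is in hand, the theorem follows immediately, and the connectedness of $F$ plays no role beyond keeping the setup standard.
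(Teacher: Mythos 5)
Your proposal is correct and is essentially the paper's own argument: the paper likewise observes (via a local picture at each medial vertex) that black splits ``snip through'' the corresponding edges of $F_{bl}$, so the state components trace the boundary components of the spanning ribbon subgraph $(V(F_{bl}),W)$, and then obtains the closed form from Euler's formula. Your remark that connectedness of $F$ is not really used is also consistent with the paper's proof.
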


\begin{proof}
This is nearly a tautology.  We see in Figure \ref{taut2}  an edge of $F_{bl}$  (realised as a ribbon graph) together with the corresponding vertex of $F$, which shows that black splits essentially `snip through' the corresponding edges, effectively deleting them.
Thus, the components of the graph state $s$ of $F_{bl}$ just follow the face boundaries when the edges corresponding to black splits are deleted. The number of circuits in a state with no crossings is then just $f_{F_{bl}}(W)$.  The right-hand side of Equation (\ref{our noncrossing}) follows from Euler's formula.
\end{proof}

\begin{figure}[ht]
\centering
\subfigure[The edge $e$ in $G$ and corresponding vertex in $G_m$. ]{
\labellist \small\hair 2pt
\pinlabel {$e$}  at     125 67
\pinlabel {$v_e$}  at    69 67
\endlabellist
\quad\includegraphics[height=20mm]{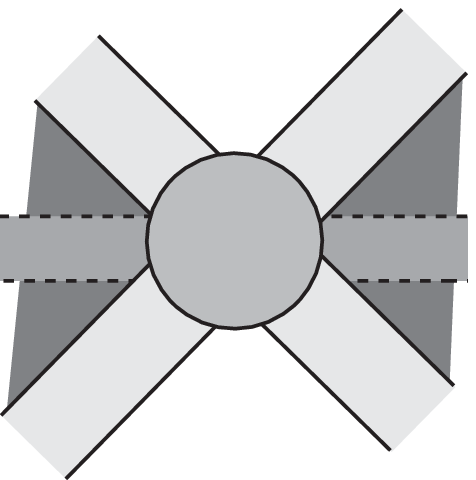}\quad
\label{taut2a}
}
\hspace{10mm}
\subfigure[Black split: `snips' $e$.]{
\quad \includegraphics[height=20mm]{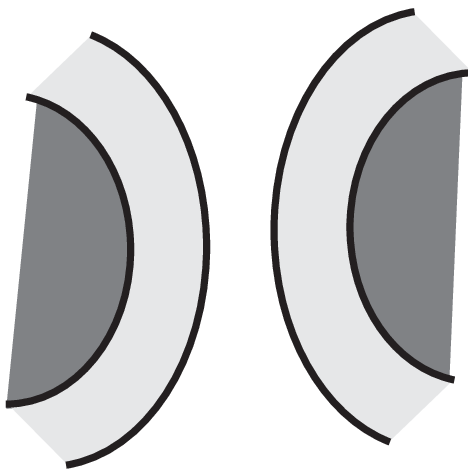}\quad
\label{taut2b}
}
\hspace{10mm}
\subfigure[White split: follows the boundaries of $e$.]{
\quad\includegraphics[height=20mm]{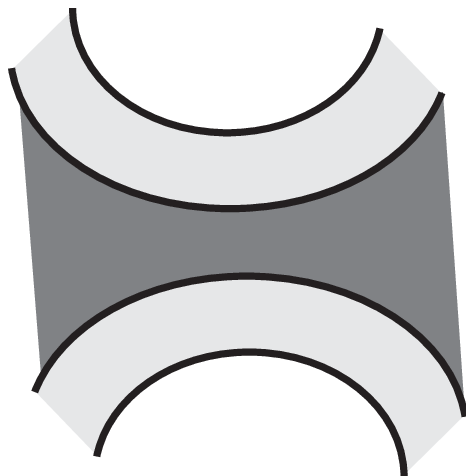}\quad
\label{taut2c}
}
\hspace{10mm}
\subfigure[Crossing: follows the boundaries of a half twist of $e$.]{
\quad\includegraphics[height=20mm]{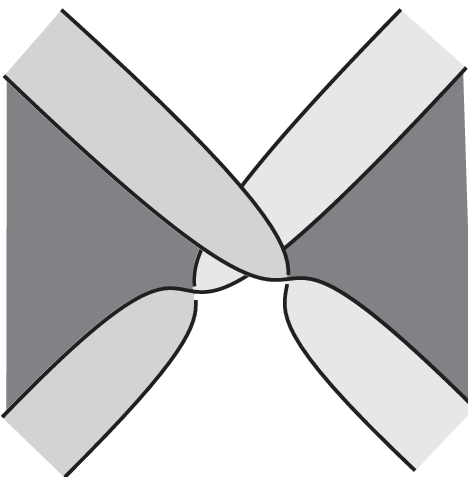}\quad
\label{taut2d}
}
\caption{For the justification of Propositions~\ref{our components} and \ref{our components2}.}
\label{taut2}
\end{figure}

Although, since practically tautological, Proposition \ref{our components} may be less useful than Theorem \ref{LV components}, it does lead us to rewrite Theorem \ref{LV components} in a form that reveals why the theorem does not generalise to other surfaces.

\begin{theorem}\label{LV components2} Let $F$ be a connected checkerboard coloured 4-regular graph cellularly embedded in the sphere, torus, or real projective plane.  Then the number of components of a graph state without crossings is equal to
\begin{equation}\label{LV noncrossing2}
\min \{ f_{F_{bl}}(W) + \gamma_{F_{wh}}(B), \quad f_{F_{bl}}(W) + \gamma_{F_{bl}}(W) \},
\end{equation}
where $F_{bl}$ and $F_{wh}$ are viewed as ribbon graphs,  $B$ is the set of edges of either $F_{bl}$ or $F_{wh}$ corresponding to vertices of $F$ with a black split in the graph state, and where $W$ is the set of edges of either $F_{bl}$ or $F_{wh}$ corresponding to vertices of $F$ with a white split in the graph state, when we view $F$ as the medial graph of both $F_{wh}$ and $F_{bl}$.
\end{theorem}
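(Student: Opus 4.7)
The plan is to derive Theorem \ref{LV components2} as a direct algebraic rewriting of Theorem \ref{LV components}, using Euler's formula for ribbon graphs to translate the cycle-matroid rank expressions appearing there into face-count and Euler-genus data, together with a short symmetry observation that falls out of Proposition \ref{our components}.

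First, since $F$ is connected, both Tait graphs $F_{bl}$ and $F_{wh}$ are connected, so $r(F_{wh}) = v(F_{wh}) - 1$ and $r_{F_{wh}}(B) = v(F_{wh}) - c_{F_{wh}}(B)$. Substituting these into the first argument of the minimum in Theorem \ref{LV components} gives
\[
 |B| + r(F_{wh}) - 2 r_{F_{wh}}(B) + 1 \;=\; |B| - v(F_{wh}) + 2 c_{F_{wh}}(B).
\]
Applying the ribbon-graph Euler formula $v(F_{wh}) - |B| + f_{F_{wh}}(B) = 2 c_{F_{wh}}(B) - \gamma_{F_{wh}}(B)$ to the spanning ribbon subgraph on edge set $B$ of $F_{wh}$ eliminates $2 c_{F_{wh}}(B)$ and collapses this expression to $f_{F_{wh}}(B) + \gamma_{F_{wh}}(B)$. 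The analogous manipulation of the second argument of the minimum uses $v(F) - |B| = |W|$, which holds because $B$ and $W$ partition $V(F)$ whenever the graph state has no crossings, and yields $f_{F_{bl}}(W) + \gamma_{F_{bl}}(W)$.

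To match the form stated in Theorem \ref{LV components2} it then suffices to establish $f_{F_{wh}}(B) = f_{F_{bl}}(W)$, which I would obtain by invoking Proposition \ref{our components} twice: once with the given checkerboard colouring of $F$, where it equates the number of components of the graph state with $f_{F_{bl}}(W)$; and once after interchanging the two colours, under which $F_{bl}$ and $F_{wh}$ swap and so do the roles of $B$ and $W$, while the underlying graph state is unchanged, so the same count equals $f_{F_{wh}}(B)$. The two expressions must therefore agree, and feeding everything back into Theorem \ref{LV components} delivers the minimum in the claimed form. I do not anticipate any serious obstacle here: the proof is essentially a single application of Euler's formula combined with the colour-swap symmetry of Proposition \ref{our components}, and the only bookkeeping that requires care is the convention $B \subseteq E(F_{wh})$ while $W \subseteq E(F_{bl})$, which is fixed by Definition \ref{c1.s5.ss2.d1} and the convention that black splits at vertices of $F$ correspond to edges of $F_{wh}$.
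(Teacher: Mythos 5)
Your proposal is correct and follows essentially the same route as the paper: substitute the rank formulas for the connected Tait graphs into the two arguments of the minimum in Theorem \ref{LV components}, apply Euler's formula for ribbon graphs to each, and then use the identity $f_{F_{wh}}(B)=f_{F_{bl}}(W)$. The only (harmless) difference is that you justify this last identity by the colour-swap symmetry of Proposition \ref{our components}, whereas the paper deduces it directly from the fact that complementary spanning ribbon subgraphs of dual ribbon graphs have the same boundary components.
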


\begin{proof}
Viewing $F_{bl}$ and $F_{wh}$ as  ribbon graphs, Euler's formula states that $v(G)-e(G)+f(G)  =2c(G)-\gamma(G)$.
  With this,
\begin{equation*}
\begin{split}
|B| +r(F_{wh})-2r_{F_{wh}}(B)+1
&=|B|+v(F_{wh})-2v(F_{wh})+2c_{F_{wh}}(B)\\
&= f_{F_{wh}}(B) + \gamma_{F_{wh}}(B) \\
&= f_{F_{bl}}(W) + \gamma_{F_{wh}}(B),
\end{split}
\end{equation*}
where the  last equality follows by noting that since $B$ and $W$ are complementary sets in dual graphs, $f_{F_{bl}}(W)=f_{F_{wh}}(B)$.
A similar calculation shows that $v(F)-|B|+ r(F_{bl}) -2r_{F_{bl}}(W)+ 1 =f_{F_{bl}}(W) + \gamma_{F_{bl}}(W)$, and the result then follows by Theorem~\ref{LV components}.
\end{proof}

In the proof of Corollary \ref{low genus} we can now see the importance of low genus in Theorem \ref{LV components}.
\begin{corollary}\label{low genus}
If  $F$ is a connected checkerboard coloured 4-regular graph cellularly embedded in the sphere, torus, or projective plane, then
\[
\min \{ f_{F_{bl}}(W) + \gamma_{F_{wh}}(B), \quad f_{F_{bl}}(W) + \gamma_{F_{bl}}(W) \}=f_{F_{bl}}(W),
\]
where $F_{bl}$ and $F_{wh}$ are viewed as ribbon graphs,  and $B$ and $W$ are as in the statement of Theorem~\ref{LV components2}.
\end{corollary}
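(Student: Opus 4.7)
The plan is to observe that this corollary is an immediate consequence of comparing two formulas for the number of components $c(s)$ of a graph state without crossings that have already been established in this section. Proposition~\ref{our components} gives, on \emph{any} surface, the identity
\[
c(s) = f_{F_{bl}}(W),
\]
while Theorem~\ref{LV components2}, which is Las~Vergnas' Theorem~\ref{LV components} rewritten in ribbon-graph language, gives, on the sphere, torus, or projective plane,
\[
c(s) = \min\bigl\{\, f_{F_{bl}}(W) + \gamma_{F_{wh}}(B),\ f_{F_{bl}}(W) + \gamma_{F_{bl}}(W)\,\bigr\}.
\]

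First I would verify that both results genuinely apply under the hypotheses of the corollary: $F$ is connected, checkerboard coloured, $4$-regular, and cellularly embedded in one of the three listed surfaces, and one simply takes $s$ to be any crossing-free graph state with white-split vertex set $W$ and black-split vertex set $B$. Equating the two expressions for $c(s)$ then yields the claimed equality with no further computation required. Equivalently, the identity to be proved is just the statement that on the three low-genus surfaces, the minimum in the Las~Vergnas formula collapses to its common term $f_{F_{bl}}(W)$; that is, at least one of the ``correction'' terms $\gamma_{F_{wh}}(B)$ or $\gamma_{F_{bl}}(W)$ must vanish.

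There is no serious obstacle here, because all of the substantive work is already contained in Proposition~\ref{our components} and Theorems~\ref{LV components} and~\ref{LV components2}. The value of writing the result in this form is interpretive: it makes transparent \emph{why} Las~Vergnas' original formula is restricted to the sphere, torus, and projective plane. On a higher-genus surface the two genus corrections $\gamma_{F_{wh}}(B)$ and $\gamma_{F_{bl}}(W)$ can simultaneously be positive, so that the Las~Vergnas minimum strictly exceeds $f_{F_{bl}}(W)$ and therefore no longer records $c(s)$; in contrast, the ribbon-graph formula of Proposition~\ref{our components} continues to hold on every surface.
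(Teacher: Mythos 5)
Your argument is logically sound and free of circularity: under the corollary's hypotheses, Proposition~\ref{our components} and Theorem~\ref{LV components2} both compute the number of components of the same crossing-free state $s$ (and neither of those results depends on the corollary), so equating the two expressions forces the minimum to collapse to $f_{F_{bl}}(W)$. However, this is a genuinely different route from the paper's. The paper proves the corollary directly, by showing that at least one of the two correction terms vanishes: on the sphere both $\gamma_{F_{wh}}(B)$ and $\gamma_{F_{bl}}(W)$ are $0$, while on the torus and projective plane the spanning ribbon subgraphs $F_{wh}-W$ (edge set $B$) and $F_{bl}-B$ (edge set $W$) are edge-disjoint once the edges of $F_{wh}$ and $F_{bl}$ are identified, so they cannot both contain fundamental cycles, and hence one of the two Euler genera must be $0$. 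The difference matters for what the result is doing in the paper. Your derivation makes the corollary a formal consequence of Las~Vergnas' Theorem~\ref{LV components} (via Theorem~\ref{LV components2}), which is imported without proof; it certifies the identity but extracts no new information from it. The paper's proof is independent of Las~Vergnas' theorem, and that independence is the point: combined with the essentially tautological Proposition~\ref{our components}, the direct proof of the corollary reproves Theorems~\ref{LV components} and~\ref{LV components2} and isolates the precise topological mechanism (edge-disjointness preventing both subgraphs from carrying genus) that fails on higher-genus surfaces --- exactly the explanatory content your closing paragraph asserts but, on your route, never actually establishes.
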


\begin{proof}
For the plane, torus, or projective plane, we note that $\gamma_{F_{wh}}(B)$ and $\gamma_{F_{bl}}(W)$ are in $\{0,1,2\}$.  For the plane, both are 0, so the result follows immediately.  On the torus and the projective plane, since $(F_{wh}- W)$ and $(F_{bl}- B)$ are edge disjoint (if we identify the edges of $F_{wh}$ and $F_{bl}$), both cannot contain fundamental cycles.  Thus, one or the other of $\gamma_{F_{wh}}(B)$ and $\gamma_{F_{bl}}(W)$ must be 0, from which the result follows. This  is not the case  on surfaces of higher genus.
\end{proof}

The tools of twisted duality from \cite{EMM2,EMMbook} allow us to extend the enumeration formula in Proposition \ref{our components} to all graph states, not just those without crossings.  We will not review those tools in detail here, but only note that an edge in a ribbon graph may be given a ``half-twist'', i.e. detach one end of an ribbon from an incident vertex, give the ribbon a half twist, and then reattach it.  If $G$ is a ribbon graph, and $A \subseteq E(G)$, then $G^{\tau(A)}$ is the ribbon graph resulting from giving a half-twist to all the edges in $A$.

\begin{proposition}\label{our components2}
Let $F$ be a connected checkerboard coloured $4$-regular cellularly embedded graph.  Then the number of circuits in any graph state is
\begin{equation*}\label{our crossing}
f((F_{bl})^{\tau(C)}- B),
\end{equation*}
where $F_{bl}$ is viewed as a ribbon graph,  $B$ is the set of edges of $F_{bl}$ corresponding to vertices of $F$ with a black split in the graph state, and $C$ is the set corresponding to crossings.
\end{proposition}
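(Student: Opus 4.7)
The plan is to extend the almost-tautological argument used in the proof of Proposition~\ref{our components} to accommodate crossings, using the local pictures collected in Figure~\ref{taut2}. Proposition~\ref{our components} already tells us what black splits and white splits do at the level of the ribbon graph $F_{bl}$: a black split at a vertex $v_e$ of $F$ ``snips through'' the corresponding edge $e$ of $F_{bl}$ and so behaves exactly like edge deletion, while a white split simply follows the two boundary arcs of $e$. Thus if the graph state has no crossings, the components of the state are precisely the boundary components of the ribbon graph $F_{bl}-B$, which is the content of Proposition~\ref{our components}.

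To handle crossings I would argue, following Figure~\ref{taut2d}, that a crossing at $v_e$ traces out the boundary of a half-twisted version of the edge $e$: the two strands of the crossing swap sides of the ribbon and so follow the two boundary arcs of a ribbon with a single half-twist inserted. This is precisely the local description of the operation $\tau$ introduced just before the statement, applied to the single edge $e$. Consequently, replacing the edges in $C$ by their half-twisted versions and keeping the edges in $E(F_{bl})\setminus(B\cup C)$ untwisted produces a ribbon graph $(F_{bl})^{\tau(C)}$ whose boundary behaviour at every non-black-split vertex matches the local behaviour of the corresponding graph state.

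Putting these pieces together, first form $(F_{bl})^{\tau(C)}$ so that untwisted edges encode white splits and half-twisted edges encode crossings, and then delete the edges in $B$ so that black splits are realised by edge removal. Tracing the boundary components of the resulting ribbon graph $(F_{bl})^{\tau(C)}-B$ reproduces, arc by arc, the edge-disjoint cycles that form the components of the graph state. Therefore the number of components of the state equals $f\bigl((F_{bl})^{\tau(C)}-B\bigr)$, as claimed.

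The only real subtlety is checking that the local model at a crossing is indeed a half-twist of the ribbon for $e$ rather than, for example, a full twist or a twist of the opposite sign; once that local identification is made, the rest is the same tautology as in Proposition~\ref{our components}. Since the number of boundary components of a ribbon graph is invariant under reversing the sign of a half-twist on any single edge, the choice of which direction to twist is immaterial, so this potential obstacle is cosmetic rather than substantive.
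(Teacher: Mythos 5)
Your proposal is correct and follows exactly the route the paper intends: the paper omits the proof, saying only that it is ``nearly a tautology'' based on Figure~\ref{taut2}, and your argument supplies precisely that local analysis (black split $=$ deletion, white split $=$ follow the ribbon boundary, crossing $=$ follow the boundary of a half-twisted ribbon). Your closing remark that the sign of the half-twist is immaterial for counting boundary components is a sensible way to dispose of the only subtlety.
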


The proof, based on Figure~\ref{taut2} is  nearly a tautology, so we omit it.

\medskip
Las~Vergnas provided, in Theorem~\ref{LVtrees} below, an application of Theorem~\ref{LV components}   which relates Eulerian circuits and spanning trees.  By using the language of ribbon graphs and the quasi-bridges introduced in the previous section, we can now extend this result and give new perspectives on circuits in medial graphs.

\begin{theorem}[Las Vergnas \cite{Las78}]\label{LVtrees}
Let $F$ be a  checkerboard coloured $4$-regular graph embedded in the sphere, torus or real projective plane. Let $s$ be a graph state without crossings of $F$, let  $B$ be  the set of edges of $F_{wh}$ corresponding to vertices of $F$ with a black split in the graph state $s$, and $W$ be the set of edges of $F_{bl}$ corresponding to vertices of $F$ with a white split in the graph state $s$. Then $s$ defines an Euler circuit of $F$ if and only if $F_{wh}- W$ is a spanning tree of $F_{wh}$, or $F_{bl}- B$ is a spanning tree of $F_{bl}$.
\end{theorem}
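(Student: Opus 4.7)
The plan is to apply the two key tools developed earlier in this section, namely Proposition \ref{our components} and Corollary \ref{low genus}. To begin, I observe that a graph state $s$ without crossings describes an Euler circuit of $F$ precisely when it has exactly one component, so the task reduces to characterising when the number of components equals $1$. By Proposition \ref{our components} this number equals $f_{F_{bl}}(W)$; and since the choice of which face colour is ``black'' is arbitrary, the same proposition applied to the opposite colouring yields the alternative expression $f_{F_{wh}}(B)$ for the same quantity. Thus I need to show that, under the low genus hypothesis, $f_{F_{bl}}(W)=1$ if and only if $W$ is a spanning tree of $F_{bl}$ or $B$ is a spanning tree of $F_{wh}$.

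For the ``if'' direction, which holds on any surface, a direct computation with Euler's formula for ribbon graphs, $v-e+f = 2c - \gamma$, applied to the spanning ribbon subgraph $(V(F_{bl}), W)$ of $F_{bl}$, shows that a spanning tree $W$ forces $f_{F_{bl}}(W)=1$; the case of $B$ being a spanning tree of $F_{wh}$ is handled symmetrically using $f_{F_{wh}}(B)$.

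For the ``only if'' direction I invoke Corollary \ref{low genus}, which on the sphere, torus, or real projective plane guarantees that at least one of $\gamma_{F_{wh}}(B)$ and $\gamma_{F_{bl}}(W)$ vanishes. Suppose first that $\gamma_{F_{bl}}(W) = 0$. Rewriting Euler's formula for $(V(F_{bl}), W)$ then gives $|W| = v(F_{bl})+1-2c_{F_{bl}}(W)$, and combining this with the obvious inequality $|W| \geq v(F_{bl}) - c_{F_{bl}}(W)$ (a spanning subgraph on $v$ vertices with $c$ components requires at least $v-c$ edges) forces $c_{F_{bl}}(W) = 1$ and $|W| = v(F_{bl})-1$, so $W$ is a spanning tree of $F_{bl}$. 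The case $\gamma_{F_{wh}}(B)=0$ is analogous, using the alternative expression $f_{F_{wh}}(B)$ for the number of components and repeating the Euler-formula argument inside $F_{wh}$.

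The main obstacle, and the reason the low-genus hypothesis cannot simply be dropped, sits in the ``only if'' direction: Corollary \ref{low genus} is precisely what prevents a spanning subgraph from producing a single boundary component while simultaneously carrying non-trivial cycles of its own. On higher genus surfaces both $\gamma_{F_{wh}}(B)$ and $\gamma_{F_{bl}}(W)$ can be positive at once, and one can then exhibit graph states that yield Euler circuits without either of $F_{wh}-W$ or $F_{bl}-B$ being a spanning tree, so the characterisation genuinely fails beyond the three low genus surfaces addressed in the theorem.
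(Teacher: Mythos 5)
Your proof is correct. Note that the paper does not actually prove Theorem~\ref{LVtrees} directly---it is cited to Las~Vergnas---but rather remarks that, after translating graph states into ribbon-graph language (an Euler circuit without crossings corresponds to a quasi-tree, i.e.\ a one-face spanning ribbon subgraph), the result is ``completed by'' Theorem~\ref{LVpt}: there the exact additivity $\gamma_G(A^c)+\gamma_{G^*}(A)=\gamma(G)$ for quasi-trees, combined with the low-genus bound $\gamma(G)\le 2$ and the parity/orientability observation on the torus, forces one of the two complementary Euler genera to vanish, whence the corresponding subgraph is a spanning tree. You reach the same crux---that one of $\gamma_{F_{wh}}(B)$, $\gamma_{F_{bl}}(W)$ must be zero---but you import it from Corollary~\ref{low genus} (whose proof rests on the edge-disjointness/fundamental-cycle argument) rather than from the quasi-tree genus additivity, and you then finish with a direct Euler-formula count ($f=1$, $\gamma=0$, and $|W|\ge v-c$ forcing $c=1$ and $|W|=v-1$) in place of the quasi-tree reformulation. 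Both routes use only the machinery of this section; yours has the small advantage of bypassing quasi-trees entirely and making explicit where Euler's formula enters, while the paper's route via Theorem~\ref{LVpt} yields the sharper quantitative statement $\gamma_G(A^c)+\gamma_{G^*}(A)=\gamma(G)$ that pinpoints exactly how the equivalence degrades on higher genus surfaces. Your closing remark about the failure in higher genus is consistent with the paper's own comment at the end of the proof of Corollary~\ref{low genus}.
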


The language of ribbon graphs allows us to extend Theorem~\ref{LVtrees} to all cellularly embedded graphs.  If we let $G$ denote the whiteface graph $F_{wh}$ and view it as a ribbon graph, then an Eulerian circuit without crossings in $F$ corresponds to a {\em quasi-tree} of $G$, which is a  ribbon subgraph of $G$ that has exactly one face (so all the edges of a quasi-tree are quasi-bridges). In addition, the ribbon graph $F_{wh}- W$ corresponds to a ribbon subgraph $G- A$ of $G$, and $F_{bl}- B$ corresponds to a ribbon subgraph $G^*- A^c$ of $G^*$, where  we identify the edges of $G$ and $G^*$, and $A^c=E(G)- A=E(G^*)- A$. Thus, Theorem~\ref{LVtrees} is equivalent to the statement that if $G$ is a ribbon graph  homeomorphic to a punctured sphere, torus  or real projective plane, then $G- A$ is a quasi-tree if and only if $G- A$ or $G^*- A^c$ is a spanning tree of $G$. It is clear that this statement, and hence Las~Vergnas' Theorem~\ref{LVtrees}, is completed by Theorem~\ref{LVpt} below.

\begin{theorem}\label{LVpt}
Let $G$ be a ribbon graph and $A\subseteq E(G)$. Then $G- A$ is a quasi-tree if and only if $G^*- A^c$ is a quasi-tree. Moreover, if $G- A$ is a quasi-tree, then
 \[  \gamma_G(A^c)+\gamma_{G^*}(A) = \gamma(G).  \]
 \end{theorem}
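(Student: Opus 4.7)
My plan is to reduce both assertions to the single identity $f_G(A^c) = f_{G^*}(A)$, which is visible from a careful look at the band decomposition of the ambient surface. Given this identity, the first claim is immediate (both sides equal $1$ together), and the genus claim drops out of Euler's formula.

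For the identity, I would use the joint band decomposition of the closed surface $\Sigma$ obtained from $G$ by capping its holes: $\Sigma$ decomposes as the union of the $0$-bands of $G$, the $1$-bands (one per edge of $G$), and the $2$-bands, where the $2$-bands of $G$ are exactly the $0$-bands of $G^*$, glued along arcs. Under this decomposition, the ribbon subgraph $G - A$ is the union of all $0$-bands of $G$ with the $1$-bands indexed by $A^c$, while $G^* - A^c$ is the union of all $0$-bands of $G^*$ with the $1$-bands indexed by $A$. Each $1$-band belongs to exactly one of the two, their interiors are disjoint, and together they cover $\Sigma$. Hence they are complementary compact subsurfaces of $\Sigma$ and therefore share their common boundary, so
\[
f_G(A^c) = f_{G^*}(A).
\]
In particular $G-A$ has exactly one face if and only if $G^*-A^c$ does, proving the first assertion.

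For the genus identity, suppose that $G - A$ is a quasi-tree, so $f_G(A^c) = f_{G^*}(A) = 1$. A ribbon graph with a single boundary component is connected, so $c_G(A^c) = c_{G^*}(A) = 1$, and consequently $G$ itself is connected. Euler's formula $v - e + f = 2c - \gamma$ applied to $G - A$ gives
\[
\gamma_G(A^c) = 1 - v(G) + e(G) - |A|,
\]
while the same formula applied to $G^* - A^c$, together with $v(G^*) = f(G)$, yields
\[
\gamma_{G^*}(A) = 1 + |A| - f(G).
\]
Adding these and substituting Euler's formula $v(G) - e(G) + f(G) = 2 - \gamma(G)$ for $G$, the $|A|$, $v(G)$ and $e(G)$ terms all cancel to leave $\gamma_G(A^c) + \gamma_{G^*}(A) = \gamma(G)$.

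The main step that deserves care is the band-decomposition observation in the second paragraph: one must verify that the $1$-bands of $G$ and of $G^*$ coincide inside $\Sigma$, and that $G - A$ and $G^* - A^c$ do indeed tile $\Sigma$ without interior overlap. This is essentially the classical description of ribbon graph duality via band decompositions, but it is worth spelling out explicitly so that the matching of boundary components, and therefore the whole theorem, follows cleanly.
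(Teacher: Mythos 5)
Your proof is correct and follows essentially the same route as the paper's: both rest on the fact that $G-A$ and $G^*-A^c$ are complementary in the band decomposition and hence share boundary components (so $f_G(A^c)=f_{G^*}(A)$), and both then deduce the genus identity by summing Euler's formula for the two subsurfaces. The only difference is that you spell out the band-decomposition justification of the shared-boundary fact, which the paper takes as known.
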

  \begin{proof} Since the ribbon subgraphs $G - A$ and $G^*- A^c$ of $G$ have the same boundary components, $G- A$ has exactly one face  if and only if $G^*- A^c$ has exactly one face. This proves the first part of the theorem.

For the second statement, suppose that $G- A$ is a quasi-tree. It then follows that $G$, $G- A$ and $G^*|_A$ are all connected.  By Euler's formula we then have \begin{equation*}
\begin{split}
 \gamma_G(A^c)+\gamma_{G^*}(A) =&\gamma( G- A) +\gamma(G^*- A^c) \\
=& e( G- A)-v( G- A)-f( G- A)+2c( G- A)  \\&+ e(G^*- A^c)-v(G^*- A^c)-f(G^*- A^c)+2c(G^*- A^c)\\
=& e(G)-v(G)-f(G) +2 \\
=& e(G)-v(G)-f(G)+2c(G)=\gamma(G) ,
\end{split}\end{equation*} where the second equality follows since $e(G)=e( G- A)+e(G^*- A^c)$, $v( G- A)=v(G)$, $v(G^*- A^c)=v(G^*)=f(G)$, and $f( G- A)=f(G^*- A^c)=c( G- A)=c(G^*- A^c)=c(G)=1$ (as $G- A$ is a quasi-tree).
\end{proof}

\subsection{A curious relation}

In \cite{Las78}, Las Vergnas also gave interpretations for evaluations of $L_G$ for graphs cellularly embedded in the plane, torus, or real projective plane in terms of the medial graph of $G$.  We conclude by showing that this now yields a very different kind of relationship between the Las Vergnas polynomial and the Bollob\'as-Riordan polynomial than that given previously in Section \ref{sec:polys}.  This identity uses circuits in medial graphs to give a relation between one variable specialisations of $L_G$ and $R_G$ on low genus graphs.
To do so, we first note the following evaluation of $R_G$.
\begin{proposition}\label{noncrossgenfunct}
Let $G$ be a connected cellularly embedded graph
 and let $f_k(G_m)$ be the number of $k$-component graph  states of its medial graph $G_m$ without crossings.  Then
\[
t R_G( t+1, t, 1/t)=\sum_{k \geq 1}f_k(G_m) t^k.
\]
\end{proposition}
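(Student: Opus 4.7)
The plan is to interpret both sides as generating functions over subsets $A \subseteq E(G)$ in the variable $t$ and show the exponents match term by term. The left-hand side comes from the state sum for $R_G$, while the right-hand side will be rewritten using Proposition~\ref{our components}.

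First I would set $F := G_m$ and identify $G$ with the blackface graph $F_{bl}$ viewed as a ribbon graph. A graph state of $G_m$ without crossings is a choice at each vertex of either a black or a white split; since vertices of $G_m$ correspond to edges of $G$, such states are in natural bijection with subsets $W \subseteq E(G)$ (namely the edges corresponding to the white-split vertices). By Proposition~\ref{our components}, the state corresponding to $W$ has exactly $f_G(W)$ components. Hence
\[
\sum_{k \ge 1} f_k(G_m)\, t^k = \sum_{W \subseteq E(G)} t^{f_G(W)}.
\]

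Next I would expand the left-hand side using \eqref{e.BRgenus1}:
\[
tR_G(t+1,t,1/t) = \sum_{A \subseteq E(G)} t^{\,1 + r(G) - r(A) + n(A) - \gamma_G(A)}.
\]
It then suffices to show that, for every $A \subseteq E(G)$,
\[
1 + r(G) - r(A) + n(A) - \gamma_G(A) = f_G(A).
\]
Since $G$ is connected, $r(G) = v(G) - 1$, while $r(A) = v(G) - c(A)$ and $n(A) = e(A) - v(G) + c(A)$, so the left side simplifies to $2c(A) + e(A) - v(G) - \gamma_G(A)$. Applying Euler's formula $v(A) - e(A) + f(A) = 2c(A) - \gamma_G(A)$ to the spanning ribbon subgraph $(V(G), A)$ gives exactly $f_G(A)$.

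There is no real obstacle here: the only nontrivial ingredient is Proposition~\ref{our components}, which already converts the component count of a crossing-free state into the boundary-component count of a spanning ribbon subgraph. The rest is a routine application of Euler's formula.
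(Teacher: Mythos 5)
Your argument is correct and follows essentially the same route as the paper's proof: both reduce to the identity $tR_G(t+1,t,1/t)=\sum_{A\subseteq E(G)}t^{f(A)}$ via Euler's formula and then invoke the bijection between crossing-free states and edge subsets (white splits $\leftrightarrow$ chosen edges), under which the component count of a state equals the boundary-component count $f(A)$ of the corresponding spanning ribbon subgraph. The only cosmetic difference is that you cite Proposition~\ref{our components} for this last fact, whereas the paper restates the correspondence directly.
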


\begin{proof}
This result is immediate from the relation between the topological transition polynomial and $R_G$ in \cite{EMS11},  but can also be seen as follows.  If $G$ is connected, then $R_G( t+1, t, 1/t)=t^{-1} \sum_{A \subseteq E(G)} t^{f(A)}$.  Note that there is a one-to-one correspondence between the boundary components of the spanning ribbon subgraphs of $G$ and the components of states of $G_m$ with no crossings.  This correspondence is given by a white split at the vertex corresponding to an edge $e$ if $e \in A$, and a black split otherwise.  Thus,  $R_G( t+1, t, 1/t)= t^{-1}\sum t^{c(s)}$, where the sum is over all non-crossings states $s$ of $G_m$,  and collecting like terms gives the result.
\end{proof}

\begin{theorem}\label{t.lrrelate}
 If $G$ is a graph embedded on the plane or real projective plane, then
\[ L_G(t+1, t+1, 1)=R_G(t+1, t, 1/t);\]
and if $G$ is embedded in the torus, then
\[L_{2,G}(t+1, t+1)+tL_{1,G}(t+1, t+1)+L_{0,G}(t+1, t+1)=R_G( t+1, t, 1/t) , \]
where, if we view $L_G(x,y,z)$ as a polynomial in $\left(\mathbb{Z}[x,y]\right)[z]$, then $L_{i,G}(x,y)$ is the coefficient of $z^i$ in $L_G(x,y,z)$.
\end{theorem}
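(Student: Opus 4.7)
The plan is to expand both sides as state sums over $A \subseteq E(G)$ and compare $t$-exponents term by term. Proposition~\ref{noncrossgenfunct} gives $R_G(t+1, t, 1/t) = \sum_{A \subseteq E(G)} t^{f_G(A) - 1}$ for $G$ connected. Substituting $x = y = t+1$ into the state-sum expression for $L_G$ from Proposition~\ref{our L}, and using Euler's formula to rewrite $r(G) - r(A) + n(A) = 2 c_G(A) + |A| - v(G) - 1$, one checks that the contribution of $A$ to the coefficient $L_{i,G}(t+1,t+1)$ is $t^{f_G(A) - 1 + (\gamma_G(A) + \gamma_{G^*}(A^c) - \gamma(G))/2}$ when $\alpha(A) := (\gamma(G) - \gamma_G(A) + \gamma_{G^*}(A^c))/2 = i$, and is zero otherwise. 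Thus the $t$-exponent depends on $A$ only through $f_G(A)$ and the sum $\gamma_G(A) + \gamma_{G^*}(A^c)$.

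The essential topological input is the inequality $\gamma_G(A) + \gamma_{G^*}(A^c) \leq \gamma(G)$ together with the congruence $\gamma_G(A) + \gamma_{G^*}(A^c) \equiv \gamma(G) \pmod 2$. Both follow from the identity
\begin{equation*}
\gamma_G(A) + \gamma_{G^*}(A^c) = \gamma(G) + 2 \bigl( c_G(A) + c_{G^*}(A^c) - f_G(A) - 1 \bigr),
\end{equation*}
which is obtained by Euler-characteristic additivity applied to the decomposition of $\Sigma$ into the regular neighbourhoods of the complementary spanning ribbon subgraphs $(V, A) \subseteq G$ and $(V^*, A^c) \subseteq G^*$, glued along their $f_G(A) = f_{G^*}(A^c)$ shared boundary circles; connectivity of $\Sigma$ forces the parenthesised quantity to be non-positive.

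For the plane and projective plane, $\gamma(G) \leq 1$ and the inequality together with parity force $\gamma_G(A) + \gamma_{G^*}(A^c) = \gamma(G)$ for every $A$. The $t$-exponent then collapses to $f_G(A) - 1$ in every term, and summing over $i$ (that is, setting $z = 1$) yields $L_G(t+1, t+1, 1) = \sum_A t^{f_G(A) - 1} = R_G(t+1, t, 1/t)$.

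For the (orientable) torus, $\gamma(G) = 2$ and orientability of both $G$ and $G^*$ forces $\gamma_G(A)$ and $\gamma_{G^*}(A^c)$ to be even. Together with $\gamma_G(A) + \gamma_{G^*}(A^c) \leq 2$, this leaves precisely $(\gamma_G(A), \gamma_{G^*}(A^c)) \in \{(0,0), (0,2), (2,0)\}$ with corresponding $\alpha(A) \in \{1, 2, 0\}$; these three classes contribute $t^{f_G(A) - 2}$ to $L_{1,G}$, $t^{f_G(A) - 1}$ to $L_{2,G}$, and $t^{f_G(A) - 1}$ to $L_{0,G}$ respectively. Forming the combination $L_{2,G} + t L_{1,G} + L_{0,G}$ multiplies the $(0,0)$-contribution by the missing factor $t$, equalising all three exponents at $f_G(A) - 1$ and recovering $\sum_A t^{f_G(A) - 1} = R_G(t+1, t, 1/t)$. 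The main obstacle is the gluing identity together with the orientability parity; once these are in hand the rest is a bookkeeping exercise in which the precise linear combination $L_2 + t L_1 + L_0$ is forced exactly by the three allowable genus pairs on the torus.
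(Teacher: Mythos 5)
Your proof is correct, but it takes a genuinely different route from the paper. The paper's proof is essentially a citation: it invokes Las Vergnas' Proposition~4.1 of \cite{Las78}, which expresses $tL_{G}(t+1,t+1,1)$ (resp.\ the combination $L_{2,G}+tL_{1,G}+L_{0,G}$) as the generating function $\sum_k f_k(G_m)t^k$ for non-crossing graph states of the medial graph, and then matches this against Proposition~\ref{noncrossgenfunct}; the medial graph is thus the intermediary object. You instead bypass the medial graph entirely and compare the two state sums directly over $A\subseteq E(G)$, using Proposition~\ref{our L}, the identity $f_{G}(A)=f_{G^*}(A^c)$, and the Euler-characteristic gluing identity $\gamma_G(A)+\gamma_{G^*}(A^c)=\gamma(G)+2\bigl(c_G(A)+c_{G^*}(A^c)-f_G(A)-1\bigr)$, whose right-hand correction term is non-positive by connectivity of $\Sigma$. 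I checked the exponent bookkeeping (for connected $G$, $r(G)-r(A)+n(A)-\gamma_G(A)=f_G(A)-1$, so each $A$ contributes $t^{f_G(A)-1+(\gamma_G(A)+\gamma_{G^*}(A^c)-\gamma(G))/2}$ to the coefficient of $z^{\alpha(A)}$) and the case analysis for $\gamma(G)\in\{0,1,2\}$; both are right, including the torus analysis where orientability restricts the genus pairs to $(0,0),(2,0),(0,2)$. What your approach buys is a self-contained argument not reliant on the external reference, and a transparent explanation of why exactly the combination $L_{2}+tL_{1}+L_{0}$ appears on the torus and why no analogous identity holds in higher genus (more genus pairs become available, so the exponents can no longer be equalised by a single linear combination). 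Two small points worth making explicit: the argument needs $G$ connected (automatic for a cellular embedding in a connected surface, and already assumed in Proposition~\ref{noncrossgenfunct}), and you should note that a ribbon subgraph of an orientable ribbon graph is orientable and hence has even Euler genus, which is the parity input for the torus case.
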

\begin{proof} Let $F$ be the checkerboard coloured medial graph of $G$ so that $F_{bl}=G$.
Las Vergnas proved in Proposition~4.1 of \cite{Las78} that $tL_{F_{bl}}(t+1, t+1, 1)=\sum_{k \geq 1}f_k(F) t^k$ when $F$ is on the sphere or real projective plane; and that
$L_{2, F_{bl}}(t+1, t+1)+tL_{1,F_{bl}}(t+1, t+1)+L_{0,F_{bl}}(t+1, t+1)=\sum_{k \geq 1}f_k(F) t^{k-1}$, when $F$ is on the torus.  The results then follow by Proposition~\ref{noncrossgenfunct}.
\end{proof}

\section*{Acknowledgements}
The work of the first  author was supported by the National Science Foundation (NSF) under grants  DMS-1001408 and EFRI-1332411. We  thank the anonymous referees for several valuable suggestions. This work was completed while the authors were visiting the Erwin Schr\"odinger International Institute for Mathematical Physics (ESI) in Vienna. We thank the ESI for their support and for providing a productive working environment.
\bibliographystyle{amsplain}

\end{document}